\newtheorem{theorem}{Theorem}[section]
\newtheorem{lemma}[theorem]{Lemma}
\def\ifl{\iffalse }
\numberwithin{equation}{section}
\newtheorem{proposition}[theorem]{Proposition}
\newtheorem{corollary}[theorem]{Corollary}
\newtheorem{definition}[theorem]{Definition}
\newtheorem{remark}[theorem]{Remark}
\numberwithin{equation}{section}
\begin{document}

\title[fractional p-Laplacian]
{Existence and multiplicity result for a fractional p-Laplacian equation with combined fractional derivatives}

\author{C\'{e}sar E. Torres Ledesma}
\address{Departamento de  Mathem\'{a}ticas , Universidad Nacional de Trujillo Av. Juan Pablo II s/n Trujillo, Peru}
\email{ctl\_576@yahoo.es}

\author{Nemat Nyamoradi}
\address{Department of Mathematics, Faculty of Sciences, Razi University, Kermanshah 67149, Iran}
\email{neamat80@yahoo.com}

\subjclass[2010]{26A33; 35A15; 35B38}


\keywords{Caputo fractional derivatives, fractional derivative space, boundary value problem, p-Laplacian operator, genus, variational methods.}

\begin{abstract}
The aim of this paper is to obtain the existence of solutions for the following fractional p-Laplacian Dirichlet problem with mixed derivatives
\begin{eqnarray*}
&{_{t}}D_{T}^{\alpha}\left(|_{0}D_{t}^{\alpha}u(t)|^{p-2}{_{0}}D_{t}^{\alpha}u(t)\right)  =  f(t, u(t)), \;t\in [0,T],\\
&u(0)  =  u(T) = 0,
\end{eqnarray*}
where $0 < \alpha <1$, $1<p<\infty$ and $f:[0,T]\times \mathbb{R} \to \mathbb{R}$ is a continuous function. We obtain the existence of nontrivial solutions by using the direct method in variational methods and the genus in the critical point theory. Furthermore, if $0< \alpha < \frac{1}{p}$ we obtain an almost every where classical solution.
\end{abstract}

\maketitle

 \section{Introduction}

Fractional differential equations have been an area of great interest recently. This is because of both the intensive development of the theory of fractional calculus itself and the applications of such constructions in various scientific fields such as physics, chemistry, biology, geology, as well as, control theory, signal theory, nanoscience and so on \cite{DBZGJM, AKHSJT, IP, JSOAJTM, SSAKOM, YZ} and references therein.

In fact, the adequacy of fractional derivatives to describe the memory effects and hereditary properties in a great variety of processes makes fractional differential models interesting and with a great potential in applications, which is supported by the good adjustment between simulations and experimental data. As indicated in \cite{MRJTLVMV}, the dynamics of natural systems are in many occasions complex, so that classical models might not be adequate. In this reference, the authors also show that the order of the fractional derivative is important to control the speed in which the trajectories of fractional systems move with respect to the critical point. This behavior is found by the authors of \cite{MRJTLVMV} as an interesting issue in relation with anomalous behavior appearing among competing species or in the study of diseases and justifies the applicability of fractional models in biology.

The existence and multiplicity of solutions for BVP for nonlinear fractional differential equations is extensively studied using various tools of nonlinear analysis as fixed point theorems, degree theory and the method of upper and lower solutions \cite{MBJNRR, MBACDS}. Very recently, it should be noted that critical point theory and variational methods have also turned out to be very effective tools in determining the existence of solutions of BVP for fractional differential equations. The idea behind them is trying to find solutions of a given boundary value problem by looking for critical points of a suitable energy functional defined on an appropriate function space. In the last 30 years, the critical point theory has become a wonderful tool in studying the existence of solutions to differential equations with variational structures, we refer the reader to the books due to Mawhin and Willem \cite{JMMW}, Rabinowitz \cite{PR}, Schechter \cite{MS} and papers \cite{VEJR, HFFB, FJYZ0, FJYZ, NN, CT, CT1, CT2, WXJXZL, YZ}.

Recently, problems concerning solutions for the fractional p-Laplacian have been considered by many authors \cite{FJYZ, TSWL, CT3} and the references cited therein; for example, when $p=2$, the authors in \cite{FJYZ}, by using critical point theorem, investigated the existence of at least one nontrivial solution to the Dirichlet problem
\begin{eqnarray*}
&{_{t}}D_{T}^{\alpha}\left({_{0}}D_{t}^{\alpha}u(t)\right)  =  \nabla F(t, u(t)), \;t\in [0,T],\nonumber\\
&u(0)  =  u(T) = 0,
\end{eqnarray*}
where ${_{t}}D_{T}^{\alpha}$ and ${_{0}}D_{t}^{\alpha}$ are the right and left Riemann-Liouville fractional derivatives of order $0 < \alpha \leq 1$
respectively, $F : [0, T] \times \mathbb{R}^N \to \mathbb{R}$ is a given function
satisfying some assumptions and $\nabla F(t, x)$ is
the gradient of $F$ at $x$.

Furthermore, recently the following fractional Hamiltonian systems are considered
\begin{equation}\label{FH}
_{t}D_{\infty}^{\alpha}(_{-\infty}D_{t}^{\alpha}u(t)) + L(t)u(t)  =  \nabla W(t,u(t)),
\end{equation}
where $\alpha \in (1/2,1)$, $t\in \mathbb{R}$, $u\in \mathbb{R}^{n}$, $L\in C(\mathbb{R}, \mathbb{R}^{n^2})$ is a symmetric matrix valued function for all $t\in \mathbb{R}$, $W\in C^{1}(\mathbb{R}\times \mathbb{R}^{n}, \mathbb{R})$ and $\nabla W(t, u(t))$ is the gradient of $W$ at $u$. Under some suitable conditions on $L$ and $W$, in \cite{AMCT, CT, CT4, CT5, JXDOKZ, ZZRY, ZZRY1}, the authors use the Mountain Pass Theorem, Fountain Theorems and the genus properties in critical point theory to study the existence and multiplicity results for (\ref{FH}).

For general $p$, recently, Shen and Liu \cite{TSWL} and Torres \cite{CT3} have considered the solvability of the Dirichlet problem for the fractional $p$-Laplacian operator with mixed fractional derivatives
\begin{eqnarray}\label{I01}
&{_{t}}D_{T}^{\alpha}\left(|_{0}D_{t}^{\alpha}u(t))|^{p-2}{_{0}}D_{t}^{\alpha}u(t)\right)  =  \lambda f(t, u(t)), \;t\in [0,T],\nonumber\\
&u(0)  =  u(T) = 0,
\end{eqnarray}
where $\alpha \in (\frac{1}{p}, 1]$, $\lambda$ is a real parameter and $f \in C([0,T]\times \mathbb{R}, \mathbb{R})$. Assuming that the nonlinearity $f$ satisfies the well know Ambrosetti-Rabinowitz condition
$$
0< \mu F(t,u) \leq uf(t,u), \;\;\mu >p,\; |u| \geq r,\;t\in [0,T], \;\;r>0,
$$
\textcolor[rgb]{1.00,0.00,0.00}{where $F(t,u) = \int_{0}^{u}f(t,s)ds$,} both works have proved the existence and multiplicity of weak solution for problem (\ref{I01}).

We note that in \cite{TSWL} and \cite{CT3}, the authors have obtained the existence of weak solutions, but the regularity of this solution is not know, only in the case $p=2$ is know. Furthermore, all work cited above, consider the case $\frac{1}{p} < \alpha \leq 1$. Motivated by the works cited above, in this work, we consider the case when $0< \alpha < 1$ and study the existence and multiplicity of solutions for (\ref{I01}), when $\lambda = 1$. Moreover in the case $0< \alpha < \frac{1}{p}$ we obtain almost every where classical solution.

Before continuing, we make precise definitions of the notion of solutions
for the equation. We say that $u\in E_{0}^{\alpha ,p}$ is a weak solution of problem (\ref{I01}) \textcolor[rgb]{1.00,0.00,0.00}{with $\lambda = 1$}, if
$$
\int_{0}^{T} |{_{0}}D_{t}^{\alpha}u(t)|^{p-2}{_{0}}D_{t}^{\alpha}u(t) {_{0}}D_{t}^{\alpha} \varphi (t)dt = \int_{0}^{T} f(t,u(t))\varphi (t)dt,
$$
for any $\varphi \in E_{0}^{\alpha,p}$, where space $E_{0}^{\alpha ,p}$ will be introduced in Section $\S$ 2.

Let $I: E_{0}^{\alpha, p} \to \mathbb{R}$ the functional associated to (\ref{I01}), defined by
\begin{equation}\label{I02}
I(u) = \frac{1}{p}\int_{0}^{T} |{_{0}}D_{t}^{\alpha}u(t)|^{p}dt - \int_{\mathbb{R}}F(t,u(t))dt
\end{equation}
under our assumption $I\in C^1$ and we have
\begin{equation}\label{I03}
I'(u)v = \int_{0}^{T} |{_{0}}D_{t}^{\alpha}u(t)|^{p-2}{_{0}}D_{t}^{\alpha}u(t){_{0}}D_{t}^{\alpha}v(t)dt - \int_{0}^{T}f(t,u(t))v(t)dt.
\end{equation}
Moreover critical points of $I$ are weak solutions of problem (\ref{I01}).

Now, we state our main assumptions. In order to find solutions of (\ref{I01}), we will assume the following general hypotheses. Let $f\in C([0,T] \times \mathbb{R}, \mathbb{R})$ such that
\begin{enumerate}
\item[$(f_1)$] $F(t,0) = 0$ for all $t\in [0,T]$, $F(t,u) \geq a(t)|u|^{q}$ and $|f(t,u)| \leq qb(t)|u|^{q -1}$ for all $(t,u)\in [0,T] \times \mathbb{R}$, where $1< q < p$ is a constant, $a, b: [0,T] \to \mathbb{R}^+$ are a continuous functions.
\item[$(f_2)$] There is a constant $1< \mu \leq q < p$ such that
$$
f(t,u)u \leq \mu F(t,u)\;\;\mbox{for all}\;\;t\in [0,T]\;\;\mbox{ and}\;\; u\in \mathbb{R}.
$$
\item[$(f_3)$] $F(t,u) = F(t,-u)$ for all $t\in [0,T]$ and $u\in \mathbb{R}$.
\end{enumerate}

First, using the direct method in variational methods, we get the first main result

\begin{theorem}\label{exis}
Suppose that $(f_1)-(f_2)$ hold. Then (\ref{I01}) \textcolor[rgb]{1.00,0.00,0.00}{with $\lambda = 1$} has at least a weak solution.
\end{theorem}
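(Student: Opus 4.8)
The plan is to obtain a solution as a global minimizer of the energy functional $I$ defined in \x{I02}, via the direct method of the calculus of variations on the reflexive Banach space $E_{0}^{\alpha,p}$ of Section 2. Concretely, I would show that $I$ is coercive and bounded below, that it is sequentially weakly lower semicontinuous, and that the minimizer it then admits is a nontrivial critical point, hence a weak solution of \x{I01}.

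For coercivity, the upper bound in $(f_1)$ integrates to $F(t,u)=\int_{0}^{u}f(t,s)\,ds\leq b(t)|u|^{q}$, so that $\int_{0}^{T}F(t,u)\,dt\leq \|b\|_{\infty}\|u\|_{L^{q}}^{q}$. Invoking the continuous embedding $E_{0}^{\alpha,p}\hookrightarrow L^{q}([0,T])$ from Section 2 (with constant $C>0$, and writing $\|u\|=\|{_{0}}D_{t}^{\alpha}u\|_{L^{p}}$ for the norm of $E_{0}^{\alpha,p}$), this yields
$$I(u)\geq \frac{1}{p}\|u\|^{p}-\|b\|_{\infty}C^{q}\|u\|^{q}.$$
Because $1<q<p$, the right-hand side tends to $+\infty$ as $\|u\|\to\infty$ and is bounded below on $[0,\infty)$; hence $I$ is coercive and bounded below.

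The weak lower semicontinuity splits into two pieces. The principal term $u\mapsto \frac{1}{p}\|{_{0}}D_{t}^{\alpha}u\|_{L^{p}}^{p}$ is convex and continuous on $E_{0}^{\alpha,p}$, so it is sequentially weakly lower semicontinuous. For the nonlinear term, if $u_{n}\rightharpoonup u$ weakly in $E_{0}^{\alpha,p}$ then the compact embedding $E_{0}^{\alpha,p}\hookrightarrow\hookrightarrow L^{q}([0,T])$ gives $u_{n}\to u$ strongly in $L^{q}$; combined with the continuity of $F$ and the bound $|F(t,u)|\leq b(t)|u|^{q}$, a standard continuity argument (passing to an a.e.\ convergent subsequence and dominating) shows $\int_{0}^{T}F(t,u_{n})\,dt\to\int_{0}^{T}F(t,u)\,dt$. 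Since the nonlinear part enters with a minus sign, combining the two pieces gives $\liminf_{n}I(u_{n})\geq I(u)$, i.e.\ $I$ is sequentially weakly lower semicontinuous.

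Since $E_{0}^{\alpha,p}$ is reflexive for $1<p<\infty$, a coercive, bounded-below, sequentially weakly lower semicontinuous functional attains its infimum at some $u_{0}\in E_{0}^{\alpha,p}$, and as $I\in C^{1}$ with derivative \x{I03}, $u_{0}$ satisfies $I'(u_{0})=0$ and is a weak solution. To rule out the trivial solution (note that $(f_1)$ forces $f(t,0)=0$, so $u=0$ is always a weak solution), I would fix any $\phi\neq 0$ and use the lower bound in $(f_1)$ to estimate $I(s\phi)\leq \frac{s^{p}}{p}\|\phi\|^{p}-s^{q}\int_{0}^{T}a(t)|\phi(t)|^{q}\,dt$; since $q<p$ and $\int_{0}^{T}a|\phi|^{q}\,dt>0$, this is negative for small $s>0$, whence $\inf I<0=I(0)$ and $u_{0}\neq 0$. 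The main obstacle is the weak continuity of the nonlinear functional, which relies entirely on the compactness of the embedding of $E_{0}^{\alpha,p}$ into $L^{q}$ (or into $C([0,T])$ when $\alpha p>1$) proved in Section 2; granting that, the subcritical growth $q<p$ makes both coercivity and the remainder of the argument routine.
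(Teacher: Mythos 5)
Your proof is correct, but it attains the infimum by a genuinely different mechanism than the paper. The paper first establishes (Lemma \ref{Mlm2}) that $I$ satisfies the (PS) condition --- this is precisely where hypothesis $(f_2)$ enters, through the estimate $\left(1-\frac{\mu}{p}\right)\|u_n\|_{\alpha,p}^{p} \leq \langle I'(u_n),u_n\rangle - \mu I(u_n) + \int_0^T\left[f(t,u_n)u_n-\mu F(t,u_n)\right]dt \leq M\|u_n\|_{\alpha,p}+\mu M$, together with the $(S_+)$/homeomorphism properties of $\mathcal{I}'$ from Theorem \ref{Mth01} --- and then combines (PS) with boundedness from below via the standard minimization theorem (Theorem 2.7 in Rabinowitz \cite{PR}). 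You instead run the classical direct method: coercivity, sequential weak lower semicontinuity (convexity of the principal part, plus weak continuity of $u\mapsto\int_0^T F(t,u)\,dt$ obtained from the compact embedding of Theorem \ref{comp}), and reflexivity of $E_0^{\alpha,p}$. The coercivity estimate and the nontriviality argument ($I(\sigma\varphi)<0$ for $|\sigma|$ small, using the lower bound $F(t,u)\geq a(t)|u|^q$) are the same in both. What your route buys: it never invokes $(f_2)$, so it actually proves the theorem under $(f_1)$ alone, and it avoids the deformation/Ekeland machinery hidden in Rabinowitz's theorem. What the paper's organization buys: the (PS) condition of Lemma \ref{Mlm2} is needed anyway for the multiplicity result (Theorem \ref{multiplicity}, via Clarke's theorem), so proving it once serves both theorems. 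One detail you should make explicit: Theorem \ref{comp} gives compactness of $E_0^{\alpha,p}\hookrightarrow L^p[0,T]$; the compactness into $L^q$ with $1<q<p$ that you use then follows from the continuous inclusion $L^p[0,T]\subset L^q[0,T]$ on the bounded interval $[0,T]$.
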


Moreover, we have the following regularity result for the weak solution obtained by Theorem \ref{exis}

\begin{lemma}\label{regu}
Let $0<\alpha < \frac{1}{p}$ and $f\in C([0,T] \times \mathbb{R}, \mathbb{R})$. Then, any critical point $u$ of $I$ on $E_{0}^{\alpha, p}$ is a almost everywhere solution of (\ref{I01}) on $[0,T]$.
\end{lemma}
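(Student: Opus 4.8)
The plan is to turn the variational identity into a pointwise identity by transferring the fractional derivative from the test function onto $w$ via fractional integration by parts, and then to read off the equation using the semigroup law for Riemann--Liouville integrals. Throughout, write $p' = \tfrac{p}{p-1}$, set $w(t) = |{_{0}}D_{t}^{\alpha}u(t)|^{p-2}{_{0}}D_{t}^{\alpha}u(t)$ and $h(t) = f(t,u(t))$. Since $u\in E_{0}^{\alpha,p}$ we have ${_{0}}D_{t}^{\alpha}u\in L^{p}([0,T])$, and because $|w| = |{_{0}}D_{t}^{\alpha}u|^{p-1}$ it follows that $w\in L^{p'}([0,T])$ with $\|w\|_{L^{p'}}^{p'} = \|{_{0}}D_{t}^{\alpha}u\|_{L^{p}}^{p}$. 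Using the continuity of $f$ together with the Sobolev-type embedding of $E_{0}^{\alpha,p}$ available in the regime $\alpha<\tfrac1p$, I would first check that $h\in L^{p'}([0,T])$ (in particular $h\in L^{1}$), so that every integral below converges.

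Next I would rewrite the test-function side. Every $\varphi\in E_{0}^{\alpha,p}$ satisfies $\varphi = {_{0}}I_{t}^{\alpha}({_{0}}D_{t}^{\alpha}\varphi)$ by the defining property of the space (using $\varphi(0)=0$), while conversely ${_{0}}I_{t}^{\alpha}$ maps $L^{p}$ into $E_{0}^{\alpha,p}$; hence, as $\varphi$ ranges over $E_{0}^{\alpha,p}$, the function $\phi := {_{0}}D_{t}^{\alpha}\varphi$ ranges over all of $L^{p}([0,T])$. The weak formulation then reads $\int_{0}^{T} w\,\phi\,dt = \int_{0}^{T} h\,({_{0}}I_{t}^{\alpha}\phi)\,dt$ for every $\phi\in L^{p}$. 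Applying the adjoint identity $\int_{0}^{T} h\,({_{0}}I_{t}^{\alpha}\phi)\,dt = \int_{0}^{T} ({_{t}}I_{T}^{\alpha}h)\,\phi\,dt$ — justified by Fubini, since $h\in L^{p'}$, $\phi\in L^{p}$ and ${_{t}}I_{T}^{\alpha}$ is bounded on $L^{p'}([0,T])$ — gives $\int_{0}^{T}\bigl(w - {_{t}}I_{T}^{\alpha}h\bigr)\phi\,dt = 0$ for all $\phi\in L^{p}$. By the fundamental lemma of the calculus of variations this forces $w = {_{t}}I_{T}^{\alpha}h$ almost everywhere on $[0,T]$.

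Finally I would extract the equation. Applying ${_{t}}I_{T}^{1-\alpha}$ and using the semigroup property ${_{t}}I_{T}^{1-\alpha}\,{_{t}}I_{T}^{\alpha} = {_{t}}I_{T}^{1}$ yields ${_{t}}I_{T}^{1-\alpha}w = \int_{t}^{T} h(s)\,ds$. This is precisely where the hypothesis $0<\alpha<\tfrac1p$ enters: since then $1-\alpha > \tfrac{1}{p'}$ and $w\in L^{p'}$, the Hardy--Littlewood--Sobolev regularity of the Riemann--Liouville integral makes ${_{t}}I_{T}^{1-\alpha}w$ (Hölder) continuous, and its representation as $\int_{t}^{T}h$ shows it is absolutely continuous and hence differentiable a.e. Differentiating and recalling ${_{t}}D_{T}^{\alpha}w = -\tfrac{d}{dt}\,{_{t}}I_{T}^{1-\alpha}w$, I obtain ${_{t}}D_{T}^{\alpha}\bigl(|{_{0}}D_{t}^{\alpha}u|^{p-2}{_{0}}D_{t}^{\alpha}u\bigr) = h = f(\cdot,u)$ almost everywhere on $[0,T]$, which is exactly the claimed almost-everywhere solution of (\ref{I01}).

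The main obstacle I anticipate is not the final differentiation but the two reduction steps: rigorously justifying that $\{{_{0}}D_{t}^{\alpha}\varphi:\varphi\in E_{0}^{\alpha,p}\}$ exhausts $L^{p}$ (this includes controlling the endpoint behaviour of ${_{0}}I_{t}^{\alpha}\phi$ at $t=0$, where for $\alpha p<1$ the boundary condition is understood as an $L^{p}$ trace rather than a pointwise value), and verifying the integrability $h\in L^{p'}$ without an a priori pointwise bound on $u$, since in the range $\alpha<\tfrac1p$ the embedding only yields $u\in L^{p/(1-\alpha p)}$ and not $u\in L^{\infty}$. Once these are in place, the condition $\alpha<\tfrac1p$ delivers the regularity of ${_{t}}I_{T}^{1-\alpha}w$ needed to pass from the integral identity to the pointwise equation.
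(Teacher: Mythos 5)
Your strategy (transfer the fractional integral onto the nonlinearity via the adjoint identity, identify $w:=|{_{0}}D_{t}^{\alpha}u|^{p-2}{_{0}}D_{t}^{\alpha}u$ with ${_{t}}I_{T}^{\alpha}h$ where $h=f(\cdot,u)$, then use the semigroup property and differentiate) is coherent, and its last step coincides with the paper's; but the central reduction has a genuine gap. You claim that as $\varphi$ ranges over $E_{0}^{\alpha,p}$ the functions $\phi={_{0}}D_{t}^{\alpha}\varphi$ exhaust $L^{p}[0,T]$, justified by ``conversely ${_{0}}I_{t}^{\alpha}$ maps $L^{p}$ into $E_{0}^{\alpha,p}$.'' That justification begs the question: for $\phi\in L^{p}$ one indeed has ${_{0}}I_{t}^{\alpha}\phi\in L^{p}$ and ${_{0}}D_{t}^{\alpha}({_{0}}I_{t}^{\alpha}\phi)=\phi$ a.e.\ (Theorem \ref{properties}(2)), but membership of ${_{0}}I_{t}^{\alpha}\phi$ in $E_{0}^{\alpha,p}=\overline{C_{0}^{\infty}[0,T]}^{\|\cdot\|_{\alpha,p}}$ --- i.e.\ compatibility with the boundary behaviour encoded in the space, above all at $t=T$ --- is exactly what must be proved, and it is not automatic. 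In fact the claim is \emph{false} when $\alpha>\frac{1}{p}$: there $g(t)=(T-t)^{\alpha-1}$ belongs to $L^{p'}$, satisfies ${_{t}}I_{T}^{1-\alpha}g\equiv\Gamma(\alpha)$, and hence annihilates every ${_{0}}D_{t}^{\alpha}\varphi$ with $\varphi\in C_{0}^{\infty}[0,T]$ (write ${_{0}}D_{t}^{\alpha}\varphi={_{0}}I_{t}^{1-\alpha}\varphi'$ and use the adjoint identity), so the image is a proper closed subspace of $L^{p}$. The statement you need is true precisely because $\alpha<\frac{1}{p}$: any annihilator $g\in L^{p'}$ must satisfy ${_{t}}I_{T}^{1-\alpha}g=\mathrm{const}$, hence $g=c\,(T-t)^{\alpha-1}/\Gamma(\alpha)$, which lies in $L^{p'}$ only for $c=0$; combined with the fact that ${_{0}}D_{t}^{\alpha}$ is an isometry from $(E_{0}^{\alpha,p},\|\cdot\|_{\alpha,p})$ onto its image (so the image is closed as well as dense), surjectivity follows. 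You explicitly flag this as ``the main obstacle'' but do not close it, and without it the relation $\int_{0}^{T}(w-{_{t}}I_{T}^{\alpha}h)\phi\,dt=0$ holds only for $\phi$ in a subspace, from which $w={_{t}}I_{T}^{\alpha}h$ a.e.\ does not follow.

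The paper avoids this issue altogether: it tests only with $v\in C_{0}^{\infty}[0,T]$, writes ${_{0}}D_{t}^{\alpha}v={_{0}}I_{t}^{1-\alpha}v'$, moves ${_{t}}I_{T}^{1-\alpha}$ onto $w$, integrates the right-hand side by parts classically, and then applies the du Bois--Reymond lemma with $v'$ as the free function, obtaining ${_{t}}I_{T}^{1-\alpha}w=-\int_{0}^{t}f(\sigma,u(\sigma))\,d\sigma+C$ a.e.\ with an undetermined constant that is harmless because it vanishes upon differentiation; the hypothesis $\alpha<\frac{1}{p}$ is used only once, to make ${_{t}}I_{T}^{1-\alpha}w$ continuous (Theorem \ref{properties}(5)) so that the identity holds everywhere and can be differentiated. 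So either prove the surjectivity along the lines sketched above (legitimate, since the lemma lives exactly in the regime where it holds), or switch to the paper's device of testing against $v'$. A secondary point, which you also noted: under the lemma's literal hypothesis ($f$ merely continuous) the integrability of $h$ is not automatic, since $u\notin L^{\infty}$ when $\alpha<\frac{1}{p}$; it holds in context because a critical point of $I$ presupposes the setting of $(f_1)$, which gives $|h|\leq q\|b\|_{\infty}|u|^{q-1}\in L^{p/(q-1)}\subset L^{p'}$, and your adjoint-identity step does require $h\in L^{p'}$ (not just $L^{1}$), since the integration-by-parts condition $\frac{1}{p}+\frac{1}{q}<1+\alpha$ fails for the pair $(L^{p},L^{1})$ when $\alpha<\frac{1}{p}$.
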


\begin{theorem}\label{multiplicity}
Suppose that $(f_1)$-$(f_3)$ are satisfied. Then, (\ref{I01}) \textcolor[rgb]{1.00,0.00,0.00}{with $\lambda = 1$} has infinitely many nontrivial solutions.
\end{theorem}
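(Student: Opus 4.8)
The plan is to use the evenness of $I$ guaranteed by $(f_3)$ together with a genus-based critical point theorem of Clark type: since $(f_1)$ confines $F$ to grow like $|u|^{q}$ with $1<q<p$, the leading term $\frac1p\|u\|^{p}$ dominates, so $I$ is coercive and bounded below, and the symmetric minimax levels built from sets of high genus furnish infinitely many negative critical values accumulating at $0$. First I would record the structural facts: $(f_3)$ makes $I$ even and $F(t,0)=0$ gives $I(0)=0$; integrating the bound $|f(t,u)|\le qb(t)|u|^{q-1}$ in $(f_1)$ yields $F(t,u)\le b(t)|u|^{q}$, so by the continuous embedding $E_0^{\alpha,p}\hookrightarrow L^{q}(0,T)$ from Section 2 one has $\int_0^{T}F(t,u)\,dt\le C\|u\|^{q}$. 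Hence
$$I(u)\ge \frac1p\|u\|^{p}-C\|u\|^{q},$$
and because $q<p$ this forces $I(u)\to+\infty$ as $\|u\|\to\infty$, in particular $I$ is bounded below.

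Next I would verify the Palais--Smale condition. Coercivity makes every $(PS)$ sequence $\{u_n\}$ bounded, so along a subsequence $u_n\rightharpoonup u$ in $E_0^{\alpha,p}$ and $u_n\to u$ in $L^{q}(0,T)$ by the compact embedding. Evaluating $I'(u_n)\to0$ at $u_n-u$ and estimating $\int_0^{T}f(t,u_n)(u_n-u)\,dt$ via H\"older's inequality and the growth bound in $(f_1)$ shows this term tends to $0$, leaving
$$\int_0^{T}|{_0}D_t^{\alpha}u_n|^{p-2}{_0}D_t^{\alpha}u_n\,({_0}D_t^{\alpha}u_n-{_0}D_t^{\alpha}u)\,dt\to0,$$
and the $(S_+)$ property of the fractional $p$-Laplacian then upgrades the weak convergence to strong convergence $u_n\to u$ in $E_0^{\alpha,p}$.

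Finally I would construct the negative-energy geometry required by the genus. For each $k\in\mathbb{N}$ fix a $k$-dimensional subspace $E_k\subset E_0^{\alpha,p}$; on $E_k$ all norms are equivalent, so the lower bound $F(t,u)\ge a(t)|u|^{q}$ with $a>0$ gives $\int_0^{T}F(t,u)\,dt\ge c_k\|u\|^{q}$ and thus $I(u)\le\frac1p\|u\|^{p}-c_k\|u\|^{q}$ on $E_k$. Since $q<p$, for a small radius $\rho_k>0$ the set $A_k=E_k\cap\{\|u\|=\rho_k\}$ satisfies $\sup_{A_k}I<0$ while its genus is $\gamma(A_k)\ge k$. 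Letting $\Sigma_k$ be the closed symmetric subsets $A\subset E_0^{\alpha,p}\setminus\{0\}$ with $\gamma(A)\ge k$ and setting $c_k=\inf_{A\in\Sigma_k}\sup_{u\in A}I(u)$, the even functional $I$, which is bounded below, satisfies $(PS)$, and has $I(0)=0$, produces by the genus version of the symmetric critical point theorem a sequence of critical values $c_k<0$ with $c_k\to0$; the associated critical points are infinitely many nontrivial weak solutions of (\ref{I01}).

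The step I expect to be the main obstacle is the Palais--Smale verification, specifically the passage from weak to strong convergence, which relies on the $(S_+)$ property of the fractional $p$-Laplacian; the coercivity, boundedness, and negative-geometry estimates are routine once the embeddings of $E_0^{\alpha,p}$ and the polynomial control of $(f_1)$ are in hand.
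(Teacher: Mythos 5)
Your proposal is correct and follows essentially the same route as the paper: evenness and $I(0)=0$ from $(f_1)$, $(f_3)$; coercivity and boundedness below from the $(f_1)$ growth bound; the (PS) condition via the compact embedding and Theorem \ref{Mth01}; spheres in $k$-dimensional subspaces on which $I<0$ by norm equivalence and $\tilde a=\min a>0$, giving genus $k$; and Clark's theorem (Theorem \ref{kcl}) applied for every $k$. The only minor variations are that you get boundedness of (PS) sequences from coercivity (so $(f_2)$ is never actually needed) instead of the paper's $(f_2)$-based estimate in Lemma \ref{Mlm2}, and you conclude strong convergence from the $(S_+)$ property of $\mathcal{I}'$ rather than from its homeomorphism property --- both are parts of the paper's Theorem \ref{Mth01}, so this is a cosmetic difference (and your unproven side remark that $c_k\to 0$ is not needed for the conclusion).
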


The rest of the paper is organized as follows: In Section \S 2 we present preliminaries on
fractional calculus and we introduce the functional setting of the problem. In Section \S 3 we prove Theorem \ref{exis} and Lemma \ref{regu}. Finally, in section \S 4 we prove Theorem \ref{multiplicity}.

\section{Preliminary Results}
\setcounter{equation}{0}

In this section we introduce some basic definitions of fractional calculus which are used further in this paper.  For the proof see \cite{AKHSJT, IP, SSAKOM}.


Let $u$ be a function defined on $[a,b]$. The left (right ) Riemann-Liouville fractional integral of order $\alpha >0$ for function $u$ is defined by
\begin{eqnarray*}
&_{a}I_{t}^{\alpha}u(t) = \frac{1}{\Gamma (\alpha)}\int_{a}^{t} (t-s)^{\alpha - 1}u(s)ds,\;t\in [a,b],\\
&_{t}I_{b}^{\alpha}u(t) = \frac{1}{\Gamma (\alpha)}\int_{t}^{b}(s-t)^{\alpha -1}u(s)ds,\;t\in [a,b],
\end{eqnarray*}
here, $\Gamma (\cdot)$ is the Gamma function, provided in both cases that the right-hand side is pointwise defined on $[a,b]$.


The left and right Riemann - Liouville fractional derivatives of order $\alpha >0$ for function $u$ denoted by $_{a}D_{t}^{\alpha}u(t)$ and $_{t}D_{b}^{\alpha}u(t)$, respectively, are defined by
\begin{eqnarray*}
&_{a}D_{t}^{\alpha}u(t) = \frac{d^{n}}{dt^{n}}{_{a}}I_{t}^{n-\alpha}u(t),\\
&_{t}D_{b}^{\alpha}u(t) = (-1)^{n}\frac{d^{n}}{dt^{n}}{ _{t}}I_{b}^{n-\alpha}u(t),
\end{eqnarray*}
where $t\in [a,b]$, $n-1 \leq \alpha < n$ and $n\in \mathbb{N}$.
Note that, $AC([a,
b], \mathbb{R}^N )$ is the space of functions which are absolutely
continuous on $[a, b]$ and $AC^k([a, b], \mathbb{R}^N ) (k = 0, 1,
\ldots)$ is the space of functions $f$ such that $f \in C^{k
-1}([a, b], \mathbb{R}^N )$ and $f^{k - 1} \in AC([a, b],
\mathbb{R}^N )$. So
The left and right Caputo fractional derivatives are defined via the above Riemann-Liouville fractional derivatives \cite{AKHSJT}. In particular, they are defined for the function belonging to the space of absolutely continuous function, namely, If $\alpha \in (n-1,n)$ and $u\in AC^{n}[a,b]$, then the left and right Caputo fractional derivative of order $\alpha$ for function $u$ denoted by $_{a}^{c}D_{t}^{\alpha}u(t)$ and $_{t}^{c}D_{b}^{\alpha}u(t)$ respectively, are defined by
\begin{eqnarray*}
&& _{a}^{c}D_{t}^{\alpha}u(t) = {_{a}}I_{t}^{n-\alpha}u^{(n)}(t) = \frac{1}{\Gamma (n-\alpha)}\int_{a}^{t} (t-s)^{n-\alpha -1}u^{n}(s)ds,\\
&&_{t}^{c}D_{b}^{\alpha}u(t) = (-1)^{n} {_{t}}I_{b}^{n-\alpha}u^{(n)}(t) = \frac{(-1)^{n}}{\Gamma (n-\alpha)}\int_{t}^{b} (s-t)^{n-\alpha-1}u^{(n)}(s)ds.
\end{eqnarray*}

The Riemann-Liouville fractional derivative and the Caputo fractional derivative are connected with each other by the following relations
\begin{theorem}\label{RL-C}
Let $n \in \mathbb{N}$ and $n-1 < \alpha < n$. If $u$ is a function defined on $[a,b]$ for which the Caputo fractional derivatives $_{a}^{c}D_{t}^{\alpha}u(t)$ and $_{t}^{c}D_{b}^{\alpha}u(t)$ of order $\alpha$ exists together with the Riemann-Liouville fractional derivatives $_{a}D_{t}^{\alpha}u(t)$ and $_{t}D_{b}^{\alpha}u(t)$, then
\begin{eqnarray*}
_{a}^{c}D_{t}^{\alpha}u(t) & = & _{a}D_{t}^{\alpha}u(t) - \sum_{k=0}^{n-1} \frac{u^{(k)}(a)}{\Gamma (k-\alpha + 1)} (t-a)^{k-\alpha}, \quad t\in [a,b], \\
_{t}^{c}D_{b}^{\alpha}u(t) & = & _{t}D_{b}^{\alpha}u(t) - \sum_{k=0}^{n-1} \frac{u^{(k)}(b)}{\Gamma (k-\alpha + 1)} (b-t)^{k-\alpha},\quad t\in [a,b].
\end{eqnarray*}
In particular, when $0<\alpha < 1$, we have
\begin{equation}\label{RL-C01}
_{a}^{c}D_{t}^{\alpha}u(t) =  {_{a}}D_{t}^{\alpha}u(t) -  \frac{u(a)}{\Gamma (1-\alpha)} (t-a)^{-\alpha}, \quad t\in [a,b]
\end{equation}
and
\begin{equation}\label{RL-C02}
_{t}^{c}D_{b}^{\alpha}u(t) = {_{t}}D_{b}^{\alpha}u(t) - \frac{u(b)}{\Gamma (1-\alpha)}(b-t)^{-\alpha},\quad t\in [a,b].
\end{equation}

\end{theorem}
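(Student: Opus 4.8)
The plan is to prove the left-sided identity in full and to obtain the right-sided one by running the same scheme at the endpoint $b$. I work in the natural setting $u\in AC^{n}[a,b]$ in which the Caputo derivative is defined. The starting point is Taylor's formula with integral remainder about $t=a$,
\[
u(t) = \sum_{k=0}^{n-1}\frac{u^{(k)}(a)}{k!}(t-a)^{k} + R_{n}(t), \qquad R_{n}(t) = \frac{1}{\Gamma(n)}\int_{a}^{t}(t-s)^{n-1}u^{(n)}(s)\,ds,
\]
which holds precisely because $u^{(n)}$ is integrable on $[a,b]$. Applying the left Riemann--Liouville derivative ${_{a}}D_{t}^{\alpha}$ to both sides and using its linearity, the whole problem reduces to computing ${_{a}}D_{t}^{\alpha}$ on each monomial $(t-a)^{k}$ and on the remainder $R_{n}$.

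First I would treat the polynomial part through the power rule
\[
{_{a}}D_{t}^{\alpha}(t-a)^{k} = \frac{\Gamma(k+1)}{\Gamma(k-\alpha+1)}(t-a)^{k-\alpha}, \qquad k=0,1,\dots,n-1,
\]
which follows by computing ${_{a}}I_{t}^{n-\alpha}(t-a)^{k}$ via the substitution $s=a+(t-a)x$, recognizing a Beta integral $\int_{0}^{1}(1-x)^{n-\alpha-1}x^{k}\,dx = \Gamma(k+1)\Gamma(n-\alpha)/\Gamma(k+n-\alpha+1)$, and then differentiating $n$ times. Multiplying by $u^{(k)}(a)/k!$ and summing reproduces exactly the sum $\sum_{k=0}^{n-1}u^{(k)}(a)(t-a)^{k-\alpha}/\Gamma(k-\alpha+1)$ in the statement. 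For the remainder, the key observation is that $R_{n}={_{a}}I_{t}^{n}u^{(n)}$; then the semigroup law for Riemann--Liouville integrals together with the rule $\frac{d^{n}}{dt^{n}}{_{a}}I_{t}^{\beta}={_{a}}I_{t}^{\beta-n}$ for $\beta>n$ gives
\[
{_{a}}D_{t}^{\alpha}R_{n} = \frac{d^{n}}{dt^{n}}\,{_{a}}I_{t}^{n-\alpha}\,{_{a}}I_{t}^{n}u^{(n)} = \frac{d^{n}}{dt^{n}}\,{_{a}}I_{t}^{2n-\alpha}u^{(n)} = {_{a}}I_{t}^{n-\alpha}u^{(n)} = {_{a}^{c}}D_{t}^{\alpha}u.
\]
Adding the two contributions yields ${_{a}}D_{t}^{\alpha}u = {_{a}^{c}}D_{t}^{\alpha}u + \sum_{k=0}^{n-1}u^{(k)}(a)(t-a)^{k-\alpha}/\Gamma(k-\alpha+1)$, which is the first asserted relation after rearranging.

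The step I expect to be the main obstacle is the rigorous justification of the two composition identities used on $u^{(n)}$, namely the semigroup law ${_{a}}I_{t}^{n-\alpha}{_{a}}I_{t}^{n}={_{a}}I_{t}^{2n-\alpha}$ and the differentiation-under-the-integral rule $\frac{d^{n}}{dt^{n}}{_{a}}I_{t}^{2n-\alpha}={_{a}}I_{t}^{n-\alpha}$: the former requires an application of Fubini's theorem and the latter requires differentiating a parameter integral whose boundary terms vanish because $2n-\alpha>n$. Both are legitimate here since $u^{(n)}\in L^{1}[a,b]$ and the kernels $(t-s)^{2n-\alpha-1}$ are locally integrable, and I would invoke the standard fractional-calculus facts in the cited references rather than reprove them. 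For the right-sided identity I would run the identical argument using Taylor's formula about $t=b$, the right power rule ${_{t}}D_{b}^{\alpha}(b-t)^{k}=\Gamma(k+1)(b-t)^{k-\alpha}/\Gamma(k-\alpha+1)$, and the right-sided rules $\frac{d}{dt}{_{t}}I_{b}^{\beta}=-{_{t}}I_{b}^{\beta-1}$ and ${_{t}^{c}}D_{b}^{\alpha}u=(-1)^{n}{_{t}}I_{b}^{n-\alpha}u^{(n)}$; here one should note that expanding $(t-b)^{k}=(-1)^{k}(b-t)^{k}$ produces a factor $(-1)^{k}$ in each summand, which collapses to the stated expression in the range $0<\alpha<1$ ($n=1$) relevant to this paper and gives the special cases (\ref{RL-C01}) and (\ref{RL-C02}).
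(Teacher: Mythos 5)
The paper never proves this theorem: it is imported from the references (the section opens by saying the proofs can be found in \cite{AKHSJT,IP,SSAKOM}), so there is no internal argument to compare yours against. Your Taylor-expansion derivation is the standard one, and for the left-sided identity it is correct and essentially complete: the power rule ${_{a}}D_{t}^{\alpha}(t-a)^{k}=\Gamma(k+1)(t-a)^{k-\alpha}/\Gamma(k-\alpha+1)$, the identification $R_{n}={_{a}}I_{t}^{n}u^{(n)}$, the semigroup law, and $\frac{d^{n}}{dt^{n}}{_{a}}I_{t}^{n}=\mathrm{id}$ are exactly the right ingredients, and the composition rules you worry about are available (for instance as parts (1)--(2) of Theorem \ref{properties}) under the hypothesis $u^{(n)}\in L^{1}$, so invoking them is legitimate.

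The right-sided case deserves a sharper conclusion than you gave it. Your computation produces
\[
{_{t}}D_{b}^{\alpha}u(t) = {_{t}^{c}}D_{b}^{\alpha}u(t) + \sum_{k=0}^{n-1}\frac{(-1)^{k}u^{(k)}(b)}{\Gamma(k-\alpha+1)}(b-t)^{k-\alpha},
\]
and this, not the printed formula, is what is true under the paper's own definition ${_{t}^{c}}D_{b}^{\alpha}u=(-1)^{n}{_{t}}I_{b}^{n-\alpha}u^{(n)}$. The two genuinely disagree for $n\geq 2$: take $u(t)=b-t$ with $1<\alpha<2$; then $u''\equiv 0$, so ${_{t}^{c}}D_{b}^{\alpha}u=0$, whereas the right-hand side of the printed identity equals ${_{t}}D_{b}^{\alpha}(b-t)+\frac{1}{\Gamma(2-\alpha)}(b-t)^{1-\alpha}=\frac{2}{\Gamma(2-\alpha)}(b-t)^{1-\alpha}\neq 0$. (The printed version is the one obtained from the alternative definition of the right Caputo derivative in which one subtracts $\sum_{k}\frac{u^{(k)}(b)}{k!}(b-t)^{k}$ before applying ${_{t}}D_{b}^{\alpha}$, as in \cite{AKHSJT}; with the integral-form definition used in this paper the signs $(-1)^{k}$ are unavoidable.) So saying the factor $(-1)^{k}$ ``collapses to the stated expression'' only for $n=1$ undersells what you found: your proof establishes a corrected form of the second displayed identity, and the statement as printed is false for $n\geq 2$ under the paper's conventions. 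None of this affects the paper, which only ever uses $0<\alpha<1$, where $k=0$ is the only term and \eqref{RL-C01}--\eqref{RL-C02} hold exactly as you derive them.
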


Now we consider some properties of the Riemann-Liouville fractional integral and derivative operators \cite{AKHSJT}.
\begin{theorem}\label{properties}
\begin{itemize}
\item[(1)] We have
\begin{eqnarray*}
&&_{a}I_{t}^{\alpha}(_{a}I_{t}^{\beta}u(t)) =  {_{a}}I_{t}^{\alpha + \beta}u(t),\\
&&_{t}I_{b}^{\alpha}(_{t}I_{b}^{\beta}u(t)) = { _{t}}I_{b}^{\alpha + \beta}u(t),\;\;\forall \alpha, \beta >0,
\end{eqnarray*}

\item[(2)] {\bf Left inverse.} Let $u \in L^{1}[a,b]$ and $\alpha >0$,
\begin{eqnarray*}
&&_{a}D_{t}^{\alpha}(_{a}I_{t}^{\alpha}u(t)) =  u(t),\;\mbox{a.e.}\;t\in[a,b],\\
&&_{t}D_{b}^{\alpha}(_{t}I_{b}^{\alpha}u(t)) =  u(t),\;\mbox{a.e.}\;t\in[a,b].
\end{eqnarray*}

\item[(3)] For $n-1\leq \alpha < n$, if the left and right Riemann-Liouville fractional derivatives $_{a}D_{t}^{\alpha}u(t)$ and $_{t}D_{b}^{\alpha}u(t)$, of the function $u$ are integrable on $[a,b]$, then
\begin{eqnarray*}
_{a}I_{t}^{\alpha}(_{a}D_{t}^{\alpha}u(t)) & = & u(t) - \sum_{k = }^{n} [_{a}I_{t}^{k-\alpha}u(t)]_{t=a} \frac{(t-a)^{\alpha -k}}{\Gamma (\alpha - k + 1)},\\
_{t}I_{b}^{\alpha}(_{t}D_{b}^{\alpha}u(t)) & = & u(t) - \sum_{k=1}^{n}[_{t}I_{n}^{k-\alpha}u(t)]_{t=b}\frac{(-1)^{n-k}(b-t)^{\alpha - k}}{\Gamma (\alpha - k +1)},
\end{eqnarray*}
for $t\in [a,b]$.

\item[(4)] {\bf Integration by parts}
\begin{equation}\label{FCeq1}
\int_{a}^{b}[_{a}I_{t}^{\alpha}u(t)]v(t)dt = \int_{a}^{b}u(t)_{t}I_{b}^{\alpha}v(t)dt,\;\alpha >0,
\end{equation}
provided that $u\in L^{p}[a,b]$, $v\in L^{q}[a,b]$ and
$$
p\geq 1,\;q\geq 1\;\;\mbox{and}\;\;\frac{1}{p}+\frac{1}{q} < 1+\alpha \;\;\mbox{or}\;\; p \neq 1,\;q\neq 1\;\;\mbox{and}\;\;\frac{1}{p} + \frac{1}{q} = 1+\alpha.
$$

\begin{equation}\label{FCeq2}
\int_{a}^{b} [_{a}D_{t}^{\alpha}u(t)]v(t)dt = \int_{a}^{b}u(t)_{t}D_{b}^{\alpha}v(t)dt,\;\;0<\alpha \leq 1,
\end{equation}
provided the conditions
\begin{eqnarray*}
&u(a) = u(b) = 0,\;u'\in L^{\infty}[a,b],\;v\in L^{1}[a,b]\;\;\mbox{or}\\
&v(a) = v(b) = 0,\;v' \in L^{\infty}[a,b], \;u \in L^{1}[a,b],
\end{eqnarray*}
are fulfilled.
\item[(5)] Let $0< \frac{1}{p} < \alpha \leq 1$ and $u(x) \in L^{p}[0,T]$, then ${_{0}}I_{t}^{\alpha}u$ is H\"older continuous on $[0,T]$ with exponent $\alpha - \frac{1}{p}$ and $\displaystyle \lim_{t\to 0^+} {_{0}}I_{t}^{\alpha}u(t) = 0$. Consequently, ${_{0}}I_{t}^{\alpha}u$ can be continuously extended by $0$ in $x = 0$.
\end{itemize}
\end{theorem}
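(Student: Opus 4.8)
The plan is to verify the five items separately, each reducing to a standard computation in measure theory together with the Beta--Gamma identity $B(\alpha,\beta)=\Gamma(\alpha)\Gamma(\beta)/\Gamma(\alpha+\beta)$; throughout I take the pointwise/a.e.\ existence of the written quantities as granted by the hypotheses. For item (1) I would write out the double integral defining ${_{a}}I_{t}^{\alpha}({_{a}}I_{t}^{\beta}u)(t)$, apply Fubini's theorem to interchange the order of integration over the triangle $\{a\le \tau\le s\le t\}$, and then evaluate the inner integral
$$
\int_{\tau}^{t}(t-s)^{\alpha-1}(s-\tau)^{\beta-1}\,ds = (t-\tau)^{\alpha+\beta-1}B(\alpha,\beta)
$$
via the substitution $s=\tau+(t-\tau)r$. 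The factor $B(\alpha,\beta)=\Gamma(\alpha)\Gamma(\beta)/\Gamma(\alpha+\beta)$ cancels the prefactor $1/(\Gamma(\alpha)\Gamma(\beta))$ and leaves exactly ${_{a}}I_{t}^{\alpha+\beta}u(t)$; the right-handed identity is symmetric.

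Item (2) follows from (1). For $0<\alpha\le 1$ I use ${_{a}}D_{t}^{\alpha}=\frac{d}{dt}\,{_{a}}I_{t}^{1-\alpha}$, so that by the semigroup property ${_{a}}D_{t}^{\alpha}({_{a}}I_{t}^{\alpha}u)=\frac{d}{dt}\,{_{a}}I_{t}^{1-\alpha}{_{a}}I_{t}^{\alpha}u=\frac{d}{dt}\,{_{a}}I_{t}^{1}u=\frac{d}{dt}\int_{a}^{t}u(s)\,ds=u(t)$ for a.e.\ $t$ by the Lebesgue differentiation theorem; the general $n-1\le\alpha<n$ case is identical after inserting the $n$-th derivative. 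Item (3) is the corresponding statement in the opposite order: applying ${_{a}}I_{t}^{\alpha}$ to ${_{a}}D_{t}^{\alpha}u=\frac{d^{n}}{dt^{n}}{_{a}}I_{t}^{n-\alpha}u$ and integrating by parts $n$ times (invoking the semigroup property at each stage) recovers $u$ together with the polynomial correction terms $[{_{a}}I_{t}^{k-\alpha}u]_{t=a}(t-a)^{\alpha-k}/\Gamma(\alpha-k+1)$ produced at the lower limit.

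For item (4), the integral identity $(\ref{FCeq1})$ is pure Fubini: substituting the definition of ${_{a}}I_{t}^{\alpha}u$ and interchanging the order of integration over $\{a\le s\le t\le b\}$ replaces $\int_{a}^{t}$ by $\int_{s}^{b}$, which is precisely ${_{t}}I_{b}^{\alpha}v$; the stated integrability/exponent conditions are exactly what make the double integral absolutely convergent so that Fubini applies. The derivative identity $(\ref{FCeq2})$ is then obtained by combining $(\ref{FCeq1})$ with one ordinary integration by parts: writing ${_{a}}D_{t}^{\alpha}u=\frac{d}{dt}{_{a}}I_{t}^{1-\alpha}u$ and ${_{t}}D_{b}^{\alpha}v=-\frac{d}{dt}{_{t}}I_{b}^{1-\alpha}v$, one transfers the single derivative, and the hypotheses $u(a)=u(b)=0$ (or the symmetric conditions on $v$) annihilate the boundary terms. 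Finally, item (5) is a direct H\"older estimate: for $s<t$ one splits ${_{0}}I_{t}^{\alpha}u(t)-{_{0}}I_{s}^{\alpha}u(s)$ into the contribution over $[s,t]$ and the kernel difference over $[0,s]$, bounding each by $\|u\|_{L^{p}}$ times the $L^{q}$-norm of the relevant power of $(t-\cdot)$, which is finite precisely because $\alpha>1/p$; collecting the powers yields the H\"older exponent $\alpha-\tfrac{1}{p}$, and the same estimate with $s=0$ gives $\lim_{t\to0^{+}}{_{0}}I_{t}^{\alpha}u(t)=0$.

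I expect the main obstacle to be item (3) and the derivative part of (4): tracking the boundary correction terms and justifying the interchange of differentiation with the fractional integral (equivalently, the repeated integration by parts) requires the integrability hypotheses to be used carefully, whereas (1), (2) and (5) are essentially immediate once Fubini, the Beta integral, and H\"older's inequality are in hand.
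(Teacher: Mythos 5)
Your sketch is correct, but note that the paper itself offers no proof of this theorem at all: it is quoted as background from the standard references (Kilbas--Srivastava--Trujillo and Samko--Kilbas--Marichev), and your argument reproduces exactly the proofs found there --- Fubini plus the Beta--Gamma identity for (1), reduction via the semigroup property and Lebesgue differentiation for (2), the Taylor-type inversion with boundary corrections for (3), Fubini followed by one classical integration by parts for (4), and the Hardy--Littlewood-type H\"older estimate for (5). The single caveat is in (\ref{FCeq1}): in the borderline case $\frac{1}{p}+\frac{1}{q}=1+\alpha$ with $p\neq 1$, $q\neq 1$, the double integral need not converge absolutely, so Fubini alone does not cover it and the cited sources invoke the Hardy--Littlewood theorem on the $L^p\to L^{q'}$ mapping of ${_{a}}I_{t}^{\alpha}$ for that endpoint case; this is a refinement of, not an objection to, your route.
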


\subsection{Fractional Derivative Space}

In order to establish a variational structure for BVP (\ref{I01}), it is necessary to construct appropriate function spaces. For this setting we take some results from \cite{FJYZ0, FJYZ, YZ}.

Let us recall that for any fixed $t\in [0,T]$ and $1\leq p <\infty$,
\begin{eqnarray*}
\|u\|_{L^{p}[0,t]}  =  \left( \int_{0}^{t} |u(s)|^{p}ds \right)^{1/p},\;
\|u\|_{L^{p}}  = \left( \int_{0}^{T} |u(s)|^{p}ds \right)^{1/p}\;\;\mbox{and}\;\;
\|u\|_{\infty} = \max_{t\in [0,T]}|u(t)|.
\end{eqnarray*}

\begin{definition}\label{FC-FEdef1}
Let $0< \alpha \leq 1$ and $1<p<\infty$. The fractional derivative space $E_{0}^{\alpha ,p}$ is defined by
\begin{eqnarray*}
E_{0}^{\alpha , p} &= & \{u\in L^{p}[0,T]:\;\;_{0}D_{t}^{\alpha}u \in L^{p}[0,T]\;\mbox{and}\;u(0) = u(T) = 0\}\\
&= & \overline{C_{0}^{\infty}[0,T]}^{\|.\|_{\alpha ,p}}.
\end{eqnarray*}
where $\|.\|_{\alpha ,p}$ is defined by
\begin{equation}\label{FC-FEeq1}
\|u\|_{\alpha ,p}^{p} = \int_{0}^{T} |u(t)|^{p}dt + \int_{0}^{T}|_{0}D_{t}^{\alpha}u(t)|^{p}dt.
\end{equation}
\end{definition}

\begin{remark}\label{RL-Cnta}
For any $u\in E_{0}^{\alpha, p}$, noting the fact that $u(0) = 0$, we have ${^{c}_{0}}D_{t}^{\alpha}u(t) = {_{0}}D_{t}^{\alpha}u(t)$, $t\in [0,T]$ according to (\ref{RL-C01}).
\end{remark}
\begin{proposition}\label{FC-FEprop1}
\cite{FJYZ0} Let $0< \alpha \leq 1$ and $1 < p <\infty$. The fractional derivative space $E_{0}^{\alpha , p}$ is a reflexive and separable Banach space.
\end{proposition}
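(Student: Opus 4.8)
The plan is to realize $E_{0}^{\alpha,p}$ as an isometric copy of a closed subspace of the product space $X := L^{p}[0,T] \times L^{p}[0,T]$ and then to transfer the three desired properties from $X$, which is well understood. Since $1<p<\infty$, each factor $L^{p}[0,T]$ is a reflexive and separable Banach space, and hence so is $X$ when equipped with the norm $\|(v,w)\|_{X} = \left( \|v\|_{L^{p}}^{p} + \|w\|_{L^{p}}^{p} \right)^{1/p}$. Concretely, I would introduce the linear map $T: E_{0}^{\alpha,p} \to X$ defined by $Tu = (u, {_{0}}D_{t}^{\alpha}u)$. By the very definition of the norm in (\ref{FC-FEeq1}) one has $\|Tu\|_{X}^{p} = \|u\|_{L^{p}}^{p} + \|{_{0}}D_{t}^{\alpha}u\|_{L^{p}}^{p} = \|u\|_{\alpha,p}^{p}$, so $T$ is a linear isometry onto its range $T(E_{0}^{\alpha,p}) \subset X$. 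Everything then reduces to showing that this range is closed in $X$.

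The closedness of the range is the crucial step and the place where the fractional structure genuinely enters. Suppose $Tu_{n} = (u_{n}, {_{0}}D_{t}^{\alpha}u_{n}) \to (u,w)$ in $X$, that is, $u_{n}\to u$ and ${_{0}}D_{t}^{\alpha}u_{n}\to w$ in $L^{p}[0,T]$. I would identify $w$ with ${_{0}}D_{t}^{\alpha}u$ by testing against $\varphi \in C_{0}^{\infty}[0,T]$ and invoking the integration-by-parts formula (\ref{FCeq2}): for each $n$ we have $\int_{0}^{T} ({_{0}}D_{t}^{\alpha}u_{n})\varphi\,dt = \int_{0}^{T} u_{n}\,({_{t}}D_{T}^{\alpha}\varphi)\,dt$. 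Passing to the limit on both sides, the left-hand side converges because ${_{0}}D_{t}^{\alpha}u_{n}\to w$ in $L^{p}$ with $\varphi$ bounded, and the right-hand side converges because $u_{n}\to u$ in $L^{p}$ with ${_{t}}D_{T}^{\alpha}\varphi$ bounded. This yields $\int_{0}^{T} w\,\varphi\,dt = \int_{0}^{T} u\,({_{t}}D_{T}^{\alpha}\varphi)\,dt$ for every such $\varphi$, which is precisely the weak statement that ${_{0}}D_{t}^{\alpha}u = w$ in $L^{p}[0,T]$; the homogeneous boundary conditions $u(0)=u(T)=0$ are inherited in the limit. Hence $u\in E_{0}^{\alpha,p}$ with $Tu=(u,w)$, so $T(E_{0}^{\alpha,p})$ is closed.

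With closedness established the conclusion is routine. As a closed subspace of the complete space $X$, the range $T(E_{0}^{\alpha,p})$ is itself complete, and since $T$ is an isometric isomorphism onto it, $E_{0}^{\alpha,p}$ is a Banach space. Moreover, a closed subspace of a reflexive space is reflexive, and any subset of a separable metric space is separable; therefore $T(E_{0}^{\alpha,p})$ is reflexive and separable, and both properties transfer back through the isometry $T$ to $E_{0}^{\alpha,p}$.

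The main obstacle is precisely the closedness argument: one must guarantee that the $L^{p}$-limit $w$ of the fractional derivatives is genuinely the fractional derivative of the $L^{p}$-limit $u$, i.e. that the operator ${_{0}}D_{t}^{\alpha}$ has closed graph on $L^{p}$. This is what forces the use of the integration-by-parts identity (\ref{FCeq2}), whose validity hinges on the boundary conditions $u(0)=u(T)=0$ built into the definition of $E_{0}^{\alpha,p}$ (together with Remark \ref{RL-Cnta}, which identifies the Riemann--Liouville and Caputo derivatives under $u(0)=0$); without these conditions the test-function duality, and hence the closedness, would fail.
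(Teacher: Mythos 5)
The paper offers no proof of this proposition at all: it is quoted from \cite{FJYZ0}, so there is nothing internal to compare against. Your argument is correct and is essentially the standard proof used in that reference: embed $E_{0}^{\alpha,p}$ isometrically into $L^{p}[0,T]\times L^{p}[0,T]$ via $u\mapsto (u,{_{0}}D_{t}^{\alpha}u)$, show the image is closed, and transfer reflexivity and separability, both of which pass to closed subspaces and through isometric isomorphisms.

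Two points in your closedness step deserve more care, though neither is fatal. First, you conclude from $\int_{0}^{T} w\varphi\,dt = \int_{0}^{T} u\,({_{t}}D_{T}^{\alpha}\varphi)\,dt$ for all test functions $\varphi$ that ``${_{0}}D_{t}^{\alpha}u = w$''; with the set-theoretic half of Definition \ref{FC-FEdef1} this requires knowing that a weak (distributional) Riemann--Liouville derivative lying in $L^{p}$ coincides with the derivative demanded in the definition --- true, but a lemma rather than a tautology. If instead you work with the other half of Definition \ref{FC-FEdef1}, namely $E_{0}^{\alpha,p}=\overline{C_{0}^{\infty}[0,T]}^{\|\cdot\|_{\alpha,p}}$, your duality computation is precisely what shows the operator ${_{0}}D_{t}^{\alpha}$ is closable (if $\varphi_{n}\to 0$ in $L^{p}$ and ${_{0}}D_{t}^{\alpha}\varphi_{n}\to w$, then $w=0$), so the completion is identified with the closed subspace $\overline{T(C_{0}^{\infty}[0,T])}$ of $X$ and the conclusion follows without invoking any weak-versus-strong identification. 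Second, the remark that the boundary conditions $u(0)=u(T)=0$ are ``inherited in the limit'' is not meaningful when $\alpha\leq \frac{1}{p}$, since elements of $E_{0}^{\alpha,p}$ need not be continuous and pointwise values at the endpoints are then undefined; this is a defect of the paper's own definition rather than of your proof, and the closure definition sidesteps it as well.
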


We recall some properties of the fractional space $E_{0}^{\alpha ,p}$.
\begin{lemma}\label{FC-FElem1}
\cite{FJYZ0} Let $0< \alpha \leq 1$ and $1\leq p < \infty$. For any $u\in L^{p}[0,T]$ we have
\begin{equation}\label{FC-FEeq2}
\|_{0}I_{\xi}^{\alpha}u\|_{L^{p}[0,t]}\leq \frac{t^{\alpha}}{\Gamma(\alpha + 1)} \|u\|_{L^{p}[0,t]},\;\mbox{for}\;\xi\in [0,t],\;t\in[0,T].
\end{equation}
\end{lemma}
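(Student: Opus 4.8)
The plan is to recognize ${_0}I_\xi^\alpha u$ as a convolution against the kernel $\tau^{\alpha-1}/\Gamma(\alpha)$ and to estimate its $L^p$ norm either through Young's convolution inequality or, more self-containedly, through a direct Hölder-plus-Fubini argument. I would describe the direct route, which also makes the sharp constant transparent. First I would write out the definition
$$
{_0}I_\xi^\alpha u(\xi) = \frac{1}{\Gamma(\alpha)}\int_0^\xi (\xi-s)^{\alpha-1}u(s)\,ds,
$$
and note that the estimate is possible precisely because $\alpha>0$ makes $\tau^{\alpha-1}$ integrable near the origin. For the borderline case $p=1$ the bound is immediate from Tonelli's theorem: integrating $|{_0}I_\xi^\alpha u(\xi)|$ over $\xi\in[0,t]$ and switching the order of integration yields $\int_0^t |u(s)|\,(t-s)^\alpha/\alpha\,ds$, and bounding $(t-s)^\alpha\le t^\alpha$ produces exactly the constant $t^\alpha/\Gamma(\alpha+1)$.

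For $1<p<\infty$ I would split the kernel as $(\xi-s)^{\alpha-1}=(\xi-s)^{(\alpha-1)/p'}(\xi-s)^{(\alpha-1)/p}$, where $1/p+1/p'=1$, and apply Hölder's inequality in the variable $s$. The factor $\bigl(\int_0^\xi (\xi-s)^{\alpha-1}\,ds\bigr)^{1/p'}=(\xi^\alpha/\alpha)^{1/p'}$ pulls out, leaving a weighted $L^p$ integral of $u$. Raising the resulting pointwise bound to the $p$-th power, estimating $\xi^{\alpha p/p'}\le t^{\alpha p/p'}$, integrating in $\xi$ over $[0,t]$, and applying Fubini once more reduces the whole expression to a multiple of $\int_0^t |u(s)|^p (t-s)^\alpha/\alpha\,ds\le (t^\alpha/\alpha)\,\|u\|_{L^p[0,t]}^p$. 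The final step is pure bookkeeping of exponents: since $p/p'+1=p$, the accumulated powers of $t$ and of $\alpha$ combine, and the identity $\Gamma(\alpha+1)=\alpha\Gamma(\alpha)$ assembles the constant into $t^{\alpha p}/\Gamma(\alpha+1)^p$; taking $p$-th roots gives the claim.

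The same inequality follows in one line from Young's convolution inequality $\|g_\alpha * u\|_p\le \|g_\alpha\|_1\,\|u\|_p$ with $g_\alpha(\tau)=\tau^{\alpha-1}/\Gamma(\alpha)$, because $\|g_\alpha\|_{L^1[0,t]}=t^\alpha/\Gamma(\alpha+1)$. There is no genuine obstacle in this lemma; the only points demanding care are choosing the Hölder split so that the residual kernel still matches after raising to the $p$-th power, and verifying that $1/\alpha^{\,p/p'+1}=1/\alpha^{p}$ so that the stated constant emerges exactly rather than with a spurious factor.
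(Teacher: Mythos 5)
Your proposal is correct. Note that the paper itself gives no proof of this lemma at all: it is quoted verbatim from Jiao and Zhou \cite{FJYZ0}, so there is no in-paper argument to compare against. Both of your routes are sound: the one-line Young's inequality argument, $\|g_\alpha * u\|_{L^p[0,t]} \le \|g_\alpha\|_{L^1[0,t]}\|u\|_{L^p[0,t]}$ with $\|g_\alpha\|_{L^1[0,t]} = t^\alpha/\Gamma(\alpha+1)$, is essentially the standard literature proof (usually phrased via the generalized Minkowski integral inequality applied to $\xi \mapsto \frac{1}{\Gamma(\alpha)}\int_0^\xi \tau^{\alpha-1}u(\xi-\tau)\,d\tau$, after extending $u$ by zero); and your direct H\"older-plus-Fubini computation also checks out, since the exponent bookkeeping $p/p'+1=p$ does make the powers of $t$ and of $\alpha$ collapse to $t^{\alpha p}/(\alpha^p\Gamma(\alpha)^p) = t^{\alpha p}/\Gamma(\alpha+1)^p$ exactly. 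The only presentational caution is the one you already flagged: in the H\"older split the residual kernel $(\xi-s)^{\alpha-1}$ must reappear intact inside the weighted $L^p$ integral, which your choice of exponents $(\alpha-1)/p'$ and $(\alpha-1)/p$ guarantees.
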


\begin{proposition}\label{FC-FEprop3}
\cite{FJYZ} Let $0< \alpha \leq 1$ and $1 < p < \infty$. For all $u\in E_{0}^{\alpha ,p}$,  we have
\begin{equation}\label{FC-FEeq3}
\|u\|_{L^{p}} \leq \frac{T^{\alpha}}{\Gamma (\alpha +1)} \|_{0}D_{t}^{\alpha}u\|_{L^{p}}.
\end{equation}
If $\alpha > 1/p$ and $\frac{1}{p} + \frac{1}{q} = 1$, then
\begin{equation}\label{FC-FEeq4}
\|u\|_{\infty} \leq \frac{T^{\alpha -1/p}}{\Gamma (\alpha)((\alpha - 1)q +1)^{1/q}}\|_{0}D_{t}^{\alpha}u\|_{L^{p}}.
\end{equation}
\end{proposition}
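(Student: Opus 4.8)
The plan is to reduce both estimates to the single representation $u(t) = {_0}I_t^{\alpha}({_0}D_t^{\alpha}u(t))$, valid for every $u \in E_0^{\alpha,p}$, and then to read off (\ref{FC-FEeq3}) from Lemma \ref{FC-FElem1} and (\ref{FC-FEeq4}) from a pointwise H\"older estimate. To establish the representation, note that since $u(0) = 0$, Remark \ref{RL-Cnta} gives ${^{c}_{0}}D_{t}^{\alpha}u(t) = {_0}D_t^{\alpha}u(t)$; the standard composition identity for the Caputo derivative in the range $0 < \alpha \le 1$, namely ${_0}I_t^{\alpha}({^{c}_{0}}D_{t}^{\alpha}u(t)) = u(t) - u(0)$, then yields
\[
u(t) = {_0}I_t^{\alpha}({_0}D_t^{\alpha}u(t)), \qquad t \in [0,T].
\]
(One may instead invoke part (3) of Theorem \ref{properties} with $n=1$ and check that the boundary coefficient $[{_0}I_t^{1-\alpha}u(t)]_{t=0}$ vanishes.) It is cleanest to verify this first for $u \in C_{0}^{\infty}[0,T]$ and then extend to all of $E_0^{\alpha,p}$ by density, using $E_0^{\alpha,p} = \overline{C_{0}^{\infty}[0,T]}^{\|.\|_{\alpha,p}}$.

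For (\ref{FC-FEeq3}), I would apply Lemma \ref{FC-FElem1} to the function $w = {_0}D_t^{\alpha}u \in L^{p}[0,T]$ at the endpoint $t = T$, giving directly
\[
\|u\|_{L^{p}} = \|{_0}I_t^{\alpha}w\|_{L^{p}[0,T]} \le \frac{T^{\alpha}}{\Gamma(\alpha+1)}\|{_0}D_t^{\alpha}u\|_{L^{p}}.
\]

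For (\ref{FC-FEeq4}), now assuming $\alpha > 1/p$, I would expand the integral in the representation and apply H\"older's inequality with exponents $p$ and $q$:
\[
|u(t)| = \frac{1}{\Gamma(\alpha)}\left|\int_{0}^{t} (t-s)^{\alpha-1}\,{_0}D_s^{\alpha}u(s)\,ds\right| \le \frac{1}{\Gamma(\alpha)}\left(\int_{0}^{t} (t-s)^{(\alpha-1)q}\,ds\right)^{1/q}\|{_0}D_t^{\alpha}u\|_{L^{p}}.
\]
The hypothesis $\alpha > 1/p$ is exactly what guarantees $(\alpha-1)q + 1 > 0$, so the inner integral equals $t^{(\alpha-1)q+1}/((\alpha-1)q+1)$; taking the $1/q$ power and using $1/q = 1 - 1/p$ produces the factor $t^{\alpha-1/p}/((\alpha-1)q+1)^{1/q}$, which is increasing in $t$. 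Taking the supremum over $[0,T]$ replaces $t$ by $T$ and gives precisely (\ref{FC-FEeq4}).

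The only genuinely delicate point is the representation formula: one must ensure that the boundary term in the fractional fundamental theorem truly vanishes, which is where $u(0) = 0$, the coincidence of the Caputo and Riemann--Liouville derivatives (Remark \ref{RL-Cnta}), and the density argument all enter. Once the representation is secured, both inequalities follow routinely from Lemma \ref{FC-FElem1} and H\"older's inequality.
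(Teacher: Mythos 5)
Your proposal is correct, and it is essentially the canonical argument: the paper states this proposition as a citation to \cite{FJYZ} without reproducing a proof, and your route --- the representation $u(t) = {_0}I_t^{\alpha}({_0}D_t^{\alpha}u(t))$ obtained from $u(0)=0$ and Remark \ref{RL-Cnta}, then Lemma \ref{FC-FElem1} at $t=T$ for \x{FC-FEeq3} and a pointwise H\"older estimate with the exponent condition $(\alpha-1)q+1>0 \Leftrightarrow \alpha > 1/p$ for \x{FC-FEeq4} --- is exactly the proof in that reference. Note that the same representation formula, justified in the same way, is what the paper itself invokes at the start of the proof of Theorem \ref{comp}, so your ``delicate point'' is consistent with how the paper handles it.
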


\begin{remark}\label{embb}
Let $1/p< \alpha \leq 1$, if $u\in E_{0}^{\alpha, p}$, then $u\in L^{q}[0,T]$ for $q\in [p, +\infty]$. In fact
\begin{eqnarray*}
\int_{0}^{T} |u(t)|^{q}dt &=& \int_{0}^{T} |u(t)|^{q-p}|u(t)|^{p}dt\\
& \leq & \|u\|_{\infty}^{q-p} \|u\|_{L^{p}}^{p}.
\end{eqnarray*}
In particular the embedding $E_{0}^{\alpha ,p} \hookrightarrow L^{q}[0,T]$ is continuous for all $q\in [p, +\infty]$.
\end{remark}

\noindent
According to (\ref{FC-FEeq3}), we can consider in $E_{0}^{\alpha ,p}$ the following norm
\begin{equation}\label{FC-FEeq5}
\|u\|_{\alpha ,p} = \|_{0}D_{t}^{\alpha}u\|_{L^{p}},
\end{equation}
and (\ref{FC-FEeq5}) is equivalent to (\ref{FC-FEeq1}).

\begin{proposition}\label{FC-FEprop4}
\cite{FJYZ} Let $0< \alpha \leq 1$ and $1 < p < \infty$. Assume that $\alpha > \frac{1}{p}$ and $\{u_{k}\} \rightharpoonup u$ in $E_{0}^{\alpha ,p}$. Then $u_{k} \to u$ in $C[0,T]$, i.e.
$$
\|u_{k} - u\|_{\infty} \to 0,\;k\to \infty.
$$
\end{proposition}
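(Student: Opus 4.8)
The plan is to deduce uniform convergence from two ingredients that are separately easy: pointwise convergence, which comes from weak convergence together with the boundedness of point evaluations, and uniform equicontinuity, which comes from the H\"older regularity of the fractional integral in item (5) of Theorem \ref{properties}. Once both are in hand, the classical fact that an equicontinuous, pointwise convergent sequence of continuous functions on the compact interval $[0,T]$ converges uniformly finishes the argument. The hypothesis $\alpha>1/p$ will enter decisively in both ingredients.

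First I would record that $\{u_k\}$ is norm bounded: a weakly convergent sequence is bounded, so by the uniform boundedness principle there is $M>0$ with $\|u_k\|_{\alpha,p}\le M$ for all $k$, equivalently $\|{}_0D_t^{\alpha}u_k\|_{L^p}\le M$ in view of the equivalent norm \eqref{FC-FEeq5}. Next I would obtain pointwise convergence. For each fixed $t\in[0,T]$ consider the evaluation map $\delta_t\colon E_0^{\alpha,p}\to\mathbb{R}$, $\delta_t(w)=w(t)$, which is linear; with $1/p+1/q=1$, estimate \eqref{FC-FEeq4} gives
$$
|\delta_t(w)|=|w(t)|\le \|w\|_{\infty}\le \frac{T^{\alpha-1/p}}{\Gamma(\alpha)\left((\alpha-1)q+1\right)^{1/q}}\,\|w\|_{\alpha,p},
$$
so $\delta_t\in(E_0^{\alpha,p})^*$; this is exactly where $\alpha>1/p$ is needed to make $\delta_t$ bounded. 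Since $u_k\rightharpoonup u$, applying $\delta_t$ yields $u_k(t)\to u(t)$ for every $t\in[0,T]$.

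The remaining ingredient is equicontinuity, and this is where I would do the real work. For $u\in E_0^{\alpha,p}$ we have $u(0)=0$, so by Remark \ref{RL-Cnta} (Caputo equals Riemann--Liouville) together with the composition rule for the Caputo derivative one has the representation $u(t)={}_0I_t^{\alpha}\big({}_0D_t^{\alpha}u\big)(t)$. In particular $u_k={}_0I_t^{\alpha}v_k$ with $v_k={}_0D_t^{\alpha}u_k$ and $\|v_k\|_{L^p}\le M$. Item (5) of Theorem \ref{properties} then guarantees that each $u_k$ is H\"older continuous on $[0,T]$ with exponent $\alpha-1/p$; tracking the constant in that estimate, which is proportional to $\|v_k\|_{L^p}\le M$, produces a modulus of continuity of the form $\omega(\delta)=CM\,\delta^{\alpha-1/p}$ that is independent of $k$. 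Hence $\{u_k\}$ is uniformly equicontinuous (and uniformly bounded, again by \eqref{FC-FEeq4}).

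Finally I would conclude: on the compact interval $[0,T]$ an equicontinuous sequence that converges pointwise converges uniformly to the same limit, so $\|u_k-u\|_{\infty}\to0$. I expect the delicate step to be the uniform equicontinuity, namely extracting from the difference ${}_0I_t^{\alpha}v_k(t_2)-{}_0I_t^{\alpha}v_k(t_1)$ a modulus of continuity that does not depend on $k$; this is precisely the point where $\alpha>1/p$ matters, since it forces $(\alpha-1)q+1>0$ so that the kernel $(t-s)^{\alpha-1}$ belongs to $L^q$, and it is handled cleanly by invoking the H\"older estimate of Theorem \ref{properties}(5) rather than estimating the two fractional integrals by hand.
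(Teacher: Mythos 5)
The paper itself offers no proof of this proposition: it is imported from \cite{FJYZ} as a quoted result, so there is no in-text argument to compare yours against line by line. Your proof is correct, and it differs in structure from the classical argument in the cited source. The proof in \cite{FJYZ} runs through compactness: a bounded sequence in $E_{0}^{\alpha,p}$ (with $\alpha>1/p$) is uniformly bounded and equicontinuous in $C[0,T]$ by the representation $u={_{0}}I_{t}^{\alpha}({_{0}}D_{t}^{\alpha}u)$ and the H\"older estimate, so Arzel\`a--Ascoli yields a uniformly convergent subsequence, whose limit is then identified with the weak limit $u$; a subsequence-of-subsequences argument upgrades this to convergence of the whole sequence. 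You replace the compactness and subsequence machinery with the observation that, when $\alpha>1/p$, each point evaluation $\delta_t$ is a bounded linear functional on $E_{0}^{\alpha,p}$ by (\ref{FC-FEeq4}), so weak convergence immediately gives pointwise convergence; combined with the same uniform equicontinuity, the elementary lemma ``pointwise convergence plus equicontinuity on a compact set implies uniform convergence'' finishes the proof with no extraction of subsequences. Both routes rest on the same two pillars --- the sup-norm estimate (\ref{FC-FEeq4}) and the H\"older modulus of ${_{0}}I_{t}^{\alpha}$ --- and your version is slightly more economical.

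One point to make explicit: Theorem \ref{properties}(5) as stated in the paper is qualitative (it asserts H\"older continuity of ${_{0}}I_{t}^{\alpha}u$ but does not display the constant), whereas your equicontinuity step needs the quantitative form, namely an estimate of the type $|{_{0}}I_{t}^{\alpha}v(t_2)-{_{0}}I_{t}^{\alpha}v(t_1)|\leq C\,\|v\|_{L^p}\,|t_2-t_1|^{\alpha-1/p}$ with $C$ depending only on $\alpha$ and $p$, so that the modulus is uniform over the norm-bounded sequence. You correctly flag this (``tracking the constant''), and the estimate follows from a routine application of H\"older's inequality to the kernel $(t-s)^{\alpha-1}$, exactly as in the computations the paper carries out in the proof of Theorem \ref{comp}; so this is a dependence on a slightly stronger version of the quoted lemma rather than a gap.
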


Now we state a compactness result for the fractional space $E_{0}^{\alpha, p}$, in the case $0< \alpha \leq \frac{1}{p}$. This will be the principal key for our analysis in the sequel. For any $h\in \mathbb{R}$ and any $u\in L^p[0,T]$, we consider the translation of $u$ by $h$ defined by
$$
\tau_h(u)(t) = \left\{\begin{array}{cc}
u(t+h), & t+h \in [0,T]\\
0, & t+h \not \in [0,T]
\end{array} \right.
$$

\begin{theorem}\label{comp}
Let $\alpha \in (0, 1)$, then the embedding $E_{0}^{\alpha ,p} \hookrightarrow L^{p}[0,T]$ is compact
\end{theorem}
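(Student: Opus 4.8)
The plan is to apply the Riesz--Fr\'echet--Kolmogorov compactness criterion in $L^p$. Since every $u\in E_{0}^{\alpha,p}$, extended by zero outside $[0,T]$, is supported in the fixed bounded interval $[0,T]$, the tightness-at-infinity hypothesis of that criterion is automatic, and it suffices to show that for every bounded sequence $\{u_{n}\}\subset E_{0}^{\alpha,p}$ one has
\[
\sup_{n}\,\|\tau_{h}u_{n}-u_{n}\|_{L^{p}}\to 0\qquad\text{as }h\to 0 .
\]
By Proposition \ref{FC-FEprop3}, a bound $\|u_{n}\|_{\alpha,p}=\|{_{0}}D_{t}^{\alpha}u_{n}\|_{L^{p}}\le M$ already forces $\|u_{n}\|_{L^{p}}$ to be bounded, so $\{u_{n}\}$ is $L^{p}$-bounded and only the equicontinuity of the translates remains. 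This is where the whole difficulty of the range $\alpha\le 1/p$ is concentrated: there is no continuous embedding into $C[0,T]$ (Proposition \ref{FC-FEprop4} is unavailable), so no Arzel\`a--Ascoli argument is possible and the purely $L^p$ criterion must be used.

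The main tool is the representation $u={_{0}}I_{t}^{\alpha}\bigl({_{0}}D_{t}^{\alpha}u\bigr)$, valid for every $u\in E_{0}^{\alpha,p}$. I would first check it on the dense subspace $C_{0}^{\infty}[0,T]$: there ${_{0}}D_{t}^{\alpha}u={^{c}_{0}}D_{t}^{\alpha}u$ by Remark \ref{RL-Cnta}, and the Caputo inversion ${_{0}}I_{t}^{\alpha}\bigl({^{c}_{0}}D_{t}^{\alpha}u\bigr)=u-u(0)=u$ applies. Since both $u\mapsto u$ and $u\mapsto {_{0}}I_{t}^{\alpha}\bigl({_{0}}D_{t}^{\alpha}u\bigr)$ are bounded maps $E_{0}^{\alpha,p}\to L^{p}$ (the second by Lemma \ref{FC-FElem1}), the identity extends to all of $E_{0}^{\alpha,p}$ by density. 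Writing $v_{n}={_{0}}D_{t}^{\alpha}u_{n}$, this exhibits $u_{n}$ on $[0,T]$ as the convolution of $v_{n}$ (extended by zero) with the Riemann--Liouville kernel $k_{\alpha}(t)=t^{\alpha-1}/\Gamma(\alpha)$ on $(0,\infty)$, which is locally integrable because $\alpha>0$.

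For $h>0$ I would split $\|\tau_{h}u_{n}-u_{n}\|_{L^{p}}^{p}$ into the interior part $\int_{0}^{T-h}|u_{n}(t+h)-u_{n}(t)|^{p}\,dt$ together with the two boundary layers $\int_{0}^{h}|u_{n}|^{p}\,dt$ and $\int_{T-h}^{T}|u_{n}|^{p}\,dt$ produced by the zero extension. On the interior, the convolution representation and Young's inequality bound the term by $\|\tau_{h}k_{\alpha}-k_{\alpha}\|_{L^{1}}\,\|v_{n}\|_{L^{p}}$ (the kernel truncated to a fixed bounded interval lies in $L^{1}$), and continuity of translation in $L^{1}$ sends this to $0$ uniformly in $n$. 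The first boundary layer is controlled directly by Lemma \ref{FC-FElem1}, which gives $\|u_{n}\|_{L^{p}[0,h]}\le \tfrac{h^{\alpha}}{\Gamma(\alpha+1)}M$.

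The hard part will be the boundary layer near $t=T$, where the left-integral estimate on $[0,t]$ cannot be used directly. Here I would split the integral defining $u_{n}(t)$ at $t=T-2h$, estimate the near-diagonal piece by a shifted (translation-invariant) version of Lemma \ref{FC-FElem1} on an interval of length $2h$, and estimate the remaining, non-singular piece by H\"older's inequality; tracking the exponents then shows that this layer is $O\!\bigl(M^{p}h^{\alpha p}\bigr)$, with only a harmless logarithmic factor in the borderline case $\alpha=1/p$, and hence tends to $0$ uniformly in $n$. Combining the three estimates yields the required uniform equicontinuity of translates, and the Riesz--Fr\'echet--Kolmogorov theorem then gives relative compactness of $\{u_{n}\}$ in $L^{p}[0,T]$, i.e.\ the claimed compact embedding.
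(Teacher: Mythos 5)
Your proposal is correct, and its skeleton is the same as the paper's: both arguments verify the Riesz--Fr\'echet--Kolmogorov criterion for a bounded sequence in $E_{0}^{\alpha,p}$, and both hinge on the representation $u={_{0}}I_{t}^{\alpha}\bigl({_{0}}D_{t}^{\alpha}u\bigr)$ (the paper gets it from Remark \ref{RL-Cnta} plus $u(0)=0$; your density argument via $C_{0}^{\infty}[0,T]$ and Lemma \ref{FC-FElem1} is a more careful justification of the same identity). The difference is in how the translation estimate is executed. The paper estimates $u_n(t+h)-u_n(t)$ by hand: it splits the difference of the two fractional integrals into a kernel-difference term $I$ over $[0,t]$ and a near-diagonal term $II$ over $[t,t+h]$, applies H\"older to each as in (\ref{co2})--(\ref{co3}), and integrates the kernels explicitly via Fubini, ending with the quantitative bound (\ref{co4}) of order roughly $h^{\alpha}$. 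You instead read the representation as a convolution $u_n=k_\alpha * v_n$ with $k_\alpha(t)=t^{\alpha-1}/\Gamma(\alpha)$ and dispatch the interior term in one stroke by Young's inequality together with continuity of translations in $L^{1}$; this is cleaner and more conceptual, at the price of losing the explicit rate in $h$. The other genuine difference is at the boundary: you decompose $\|\tau_h u_n-u_n\|_{L^p}^p$ into the interior part plus the two layers $\int_0^h|u_n|^p$ and $\int_{T-h}^T|u_n|^p$ created by the zero extension, handling the first with Lemma \ref{FC-FElem1} and the second by splitting at $T-2h$ (your claimed bound $O(M^p h^{\alpha p})$, with a harmless logarithm when $\alpha=1/p$, checks out for all $\alpha\in(0,1)$). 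The paper never isolates these layers: it treats $u_n(t+h)$ through the fractional-integral formula for every $t\in[0,T]$, which is not literally the translate of the zero-extension required by the Kolmogorov criterion. So your version is not only a valid alternative execution of the same strategy, it is actually tighter than the printed proof on exactly the point where the printed proof is sloppy.
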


\begin{proof}
From Proposition \ref{FC-FEprop3}, the embedding $E_{0}^{\alpha, p} \hookrightarrow L^{p}[0,T]$ is continuous, therefore, it is sufficient to prove that every bounded sequence in $E_{0}^{\alpha ,p}$ is pre-compact in $L^{p}[0,T]$. According to Fr\'echet-Kolmogorov Theorem, it is sufficient to prove that
\begin{equation}\label{co1}
\sup_{n}\|\tau_h(u_n) - u_n\|_p \to 0,\;\;\mbox{as}\;\;h\to 0.
\end{equation}
By Remark \ref{RL-Cnta}, for any $u \in E_{0}^{\alpha ,p}$ and the fact that $u(0) = 0$, we have that
$$
{_{0}}I_{t}^{\alpha} ({_{0}}D_{t}^{\alpha}u(t)) = u(t), \;\;t\in [0,T].
$$
So, for $h>0$ and $t,t+h \in [0,T]$, we have
\begin{eqnarray*}
&&\|u_n(t+h) - u_n(t)\|_{L^p}^{p}  =  \|{_{0}}I_{t+h}^{\alpha} {_{0}}D_{t+h}^{\alpha}u_n(t+h) - {_{0}}I_{t}^{\alpha}{_{0}}D_{t}^{\alpha}u_n(t)\|_{L^p}^{p}\\
& =&   \int_{0}^{T} \left| {_{0}}I_{t+h}^{\alpha} {_{0}}D_{t+h}^{\alpha}u_n(t+h) - {_{0}}I_{t}^{\alpha}{_{0}}D_{t}^{\alpha}u_n(t) \right|^{p}dt\\
& = & \int_{0}^{T} \left|\frac{1}{\Gamma (\alpha)}\int_{0}^{t+h}(t+h-s)^{\alpha-1}{_{0}}D_{s}^{\alpha}u_n(s)ds - \frac{1}{\Gamma (\alpha)}\int_{0}^{t}(t-s)^{\alpha - 1}{_{0}}D_{s}^{\alpha}u_n(s)ds  \right|^pdt\\
&\leq& \frac{2^{p-1}}{\Gamma(\alpha)^p}\int_{0}^{T} \Big( \left| \int_{0}^{t}[(t+h-s)^{\alpha-1} - (t-s)^{\alpha -1}]{_{0}}D_{s}^{\alpha}u_n(s)ds\right|^p  \\
&&+ \left|\int_{t}^{t+h}(t+h-s)^{\alpha-1}{_{0}}D_{s}^{\alpha}u_n(s)ds \right|^p\Big)dt\\
&\leq&  \frac{2^{p-1}}{\Gamma (\alpha)}\int_{0}^{T} \left(\int_{0}^{t}|(t+h-s)^{\alpha -1} - (t-s)^{\alpha -1}||{_{0}}D_{s}^{\alpha}u_n(s)|ds  \right)^p dt \\
&& +\frac{2^{p-1}}{\Gamma (\alpha)^p}\int_{0}^{T}\left( \int_{t}^{t+h}|(t+h-s)^{\alpha -1}||{_{0}}D_{s}^{\alpha}u_n(s)|ds \right)^p dt.
\end{eqnarray*}
Now, let
$$
\begin{aligned}
I &= \int_{0}^{t}|(t+h-s)^{\alpha-1} - (t-s)^{\alpha -1}||{_{0}}D_{s}^{\alpha}u_n(s)|ds\;\;\mbox{and}\\
II & = \int_{t}^{t+h}|(t+h-s)^{\alpha-1}||{_{0}}D_{s}^{\alpha}u_n(s)|ds.
\end{aligned}
$$
Let $q$ such that $\frac{1}{p} + \frac{1}{q} = 1$. By H\"older inequality, we get
\begin{equation}\label{co2}
\begin{aligned}
I & = \int_{0}^{t} |(t+h-s)^{\alpha-1} - (t-s)^{\alpha -1}|^{\frac{1}{p}}|{_{0}}D_{s}^{\alpha}u_n(s)| |(t+h-s)^{\alpha-1} - (t-s)^{\alpha -1}|^{\frac{1}{q}}ds\\
&\leq \left( \int_{0}^{t}|(t+h-s)^{\alpha-1} - (t-s)^{\alpha -1}||{_{0}}D_{s}^{\alpha}u_n(s)|^pds \right)^{1/p}\left( \int_{0}^{t}|(t+h-s)^{\alpha-1} - (t-s)^{\alpha -1}|ds \right)^{1/q}\\
&\leq \frac{1}{\alpha^{1/q}}[h^{\alpha} - ((t+h)^\alpha - t^\alpha)]^{1/q}\left(\int_{0}^{t} |(t+h-s)^{\alpha-1} - (t-s)^{\alpha -1}||{_{0}}D_{s}^{\alpha}u_n(s)|^pds  \right)^{1/p}\\
&\leq \frac{1}{\alpha^{1/q}}[h^{\alpha} - ((T+h)^\alpha - T^\alpha)]^{1/q}\left(\int_{0}^{t} |(t+h-s)^{\alpha-1} - (t-s)^{\alpha -1}||{_{0}}D_{s}^{\alpha}u_n(s)|^pds  \right)^{1/p}
\end{aligned}
\end{equation}
and
\begin{equation}\label{co3}
\begin{aligned}
II & = \int_{t}^{t+h}|(t+h-s)^{\alpha -1}||{_{0}}D_{s}^{\alpha}u_n(s)|ds\\
&= \int_{t}^{t+h} |(t+h-s)^{\alpha-1}|^{\frac{1}{p}}|{_{0}}D_{s}^{\alpha}u_n(s)||(t+h-s)^{\alpha -1}|^{\frac{1}{q}}ds\\
&\leq \left( \int_{t}^{t+h} |(t+h-s)^{\alpha-1}||{_{0}}D_{s}^{\alpha}u_n(s)|^pds \right)^{1/p}\left( \int_{t}^{t+h}|(t+h-s)^{\alpha-1}|ds \right)^{1/q}\\
& = \left( \frac{h}{T} \right)^{\alpha}\left( \int_{0}^{T}|(T-s)^{\alpha - 1}||{_{0}}D_{t+\frac{h}{T}s}^{\alpha}u_n(t + \frac{h}{T}s)|^pds \right)^{1/p} \left( \int_{0}^{T}|(T-s)^{\alpha-1}|ds \right)^{1/q}.
\end{aligned}
\end{equation}
Therefore, for every $n\in \mathbb{N}$, by (\ref{co2}) and (\ref{co3}), one has
\begin{equation}\label{co4}
\begin{aligned}
&\|u_n(t+h) - u_n(t)\|_{L^p}^{p} \\
& \leq \frac{2^{p-1}[h^\alpha - ((T+h)^\alpha - T^\alpha) ]^{p/q}}{\alpha ^{p/q}\Gamma (\alpha)^p} \int_{0}^{T}  \int_{0}^{t} |(t+h-s)^{\alpha-1} - (t-s)^{\alpha-1}||{_{0}}D_{s}^{\alpha}u_n(s)|^pdsdt \\
&+\frac{2^{p-1}h^{\alpha/p}}{\alpha^{p/q}\Gamma (\alpha)^pT^{\alpha}}\int_{0}^{T}\int_{0}^{T}|(T-s)^{\alpha-1}||{_{0}}D_{t+hs/T}^{\alpha}u_n(t+\frac{hs}{T})|^pdsdt\\
&\leq \frac{2^{p-1}[h^\alpha - ((T+h)^\alpha - T^\alpha) ]^{p/q}}{\alpha ^{p/q}\Gamma (\alpha)^p} \int_{0}^{T}  \int_{s}^{T} |(t+h-s)^{\alpha-1} - (t-s)^{\alpha-1}||{_{0}}D_{s}^{\alpha}u_n(s)|^pdtds \\
&+\frac{2^{p-1}h^{\alpha/p}}{\alpha^{p}\Gamma (\alpha)^p}\|{_{0}}D_{t}^{\alpha}u_n\|_{L^p}^{p}\\
&\leq \frac{2^{p-1}[h^\alpha - ((T+h)^{\alpha} - T^\alpha)]^{p/q}}{\alpha^{p/q}\Gamma (\alpha)^p} \int_{0}^{T} \frac{1}{\alpha}(h^\alpha - [(T+h - s)^{\alpha} - (T-s)^\alpha])|{_{0}}D_{t}^{\alpha}u_n(s)|^pds\\
&+\frac{2^{p-1}h^{\alpha/p}}{\alpha^{p}\Gamma (\alpha)^p}\|{_{0}}D_{t}^{\alpha}u_n\|_{L^p}^{p}\\
&\leq\left[ \frac{2^{p-1}[h^\alpha - ((T+h)^\alpha - T^{\alpha})]^p}{\Gamma (\alpha +1)^p} + \frac{2^{p-1}h^\alpha}{\Gamma (\alpha +1)^p} \right]\|u_n\|_{\alpha, p}^{p}.
\end{aligned}
\end{equation}
Therefore assertion (\ref{co1}) follows from (\ref{co4}).
\end{proof}

Now we analyze the properties of the fractional p-Laplace operator ${_{t}}D_{T}^{\alpha} (|{_{0}}D_{t}^{\alpha}u|^{p-2} {_{0}}D_{t}^{\alpha}u)$. Consider the following functional
$$
\mathcal{I}(u) = \frac{1}{p}\int_{0}^{T} |{_{0}}D_{t}^{\alpha}u(t)|^pdt,\;\;u\in E_{0}^{\alpha, p}.
$$
We know that $\mathcal{I} \in C^{1}(E_{0}^{\alpha, p}, \mathbb{R})$ (see \cite{CT3}) and the fractional $p$-Laplace type operator ${_{t}}D_{T}^{\alpha} (|{_{0}}D_{t}^{\alpha}u|^{p-2} {_{0}}D_{t}^{\alpha}u)$, is the derivative of $\mathcal{I}$ in the weak sense, namely
\begin{equation}\label{der}
\langle \mathcal{I}' (u),v \rangle = \int_{0}^{T} |{_{0}}D_{t}^{\alpha}u(t)|^{p-2}{_{0}}D_{t}^{\alpha}u(t) {_{0}}D_{t}^{\alpha}v(t)dt\;\;\forall u,v\in E_{0}^{\alpha, p}.
\end{equation}

\begin{theorem}\label{Mth01}
\begin{enumerate}
\item $\mathcal{I}':E_{0}^{\alpha, p} \to (E_{0}^{\alpha, p})^*$ is bounded and strictly monotone operator.
\item $\mathcal{I}'$ is a mapping of type $(S_+)$, namely, if $u_n \rightharpoonup u$ in $E_{0}^{\alpha, p}$ and $\limsup_{n \to +\infty} \langle \mathcal{I}'(u_n), u_n-u  \rangle \leq 0$, then $u_n \to u$ in $E_{0}^{\alpha, p}$.
\item $\mathcal{I}':E_{0}^{\alpha,p} \to (E_{0}^{\alpha, p})^*$ is a homeomorphism.
\end{enumerate}
\end{theorem}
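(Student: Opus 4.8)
The plan is to carry all three parts through the linear map $u \mapsto {_{0}}D_{t}^{\alpha}u$, which by the equivalent norm (\ref{FC-FEeq5}) is an isometry of $E_{0}^{\alpha,p}$ onto a closed subspace of $L^{p}[0,T]$; this reduces everything to the familiar monotonicity theory of the vector field $\xi \mapsto |\xi|^{p-2}\xi$. Using the formula (\ref{der}), part (1) is almost immediate: H\"older's inequality gives $|\langle \mathcal{I}'(u),v\rangle| \le \|u\|_{\alpha,p}^{p-1}\|v\|_{\alpha,p}$, so $\mathcal{I}'$ is bounded with $\|\mathcal{I}'(u)\|_{*}\le \|u\|_{\alpha,p}^{p-1}$. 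For strict monotonicity I would integrate the elementary pointwise inequality $(|\xi|^{p-2}\xi - |\eta|^{p-2}\eta)(\xi-\eta) > 0$ for all $\xi \ne \eta$ in $\mathbb{R}$; this shows $\langle \mathcal{I}'(u)-\mathcal{I}'(v),u-v\rangle > 0$ whenever ${_{0}}D_{t}^{\alpha}u \ne {_{0}}D_{t}^{\alpha}v$ on a positive–measure set, which is exactly the case $u \ne v$ since $u \mapsto {_{0}}D_{t}^{\alpha}u$ is injective (its $L^p$ norm is $\|u\|_{\alpha,p}$).

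For part (2), given $u_n \rightharpoonup u$ with $\limsup_n \langle \mathcal{I}'(u_n),u_n-u\rangle \le 0$, I would first note that $\langle \mathcal{I}'(u),u_n-u\rangle \to 0$ because $\mathcal{I}'(u)$ is a fixed functional and $u_n-u \rightharpoonup 0$; together with monotonicity this forces $\langle \mathcal{I}'(u_n)-\mathcal{I}'(u),u_n-u\rangle \to 0$. Writing $\xi = {_{0}}D_{t}^{\alpha}u_n$ and $\eta = {_{0}}D_{t}^{\alpha}u$, the integrand is the nonnegative quantity $(|\xi|^{p-2}\xi-|\eta|^{p-2}\eta)(\xi-\eta)$. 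When $p\ge 2$ the sharp bound $(|\xi|^{p-2}\xi-|\eta|^{p-2}\eta)(\xi-\eta)\ge 2^{2-p}|\xi-\eta|^p$ yields $\|u_n-u\|_{\alpha,p}^{p}\to 0$ directly.

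The hard part will be the range $1<p<2$, where the clean $|\xi-\eta|^p$ bound is false. There I would use the weighted inequality $(|\xi|^{p-2}\xi-|\eta|^{p-2}\eta)(\xi-\eta)\ge c_p\,|\xi-\eta|^2(|\xi|+|\eta|)^{p-2}$ and recover the $L^p$ norm through H\"older with exponents $2/p$ and $2/(2-p)$, obtaining
$$\|u_n-u\|_{\alpha,p}^{p}\le C\Big(\langle \mathcal{I}'(u_n)-\mathcal{I}'(u),u_n-u\rangle\Big)^{p/2}\Big(\|u_n\|_{\alpha,p}+\|u\|_{\alpha,p}\Big)^{(2-p)p/2}.$$
Here the second factor stays bounded because weakly convergent sequences are bounded, so the right-hand side tends to $0$ and strong convergence follows in this case as well.

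For part (3), injectivity is contained in strict monotonicity. For surjectivity I would verify coercivity, namely $\langle \mathcal{I}'(u),u\rangle/\|u\|_{\alpha,p} = \|u\|_{\alpha,p}^{p-1}\to\infty$, and continuity of $\mathcal{I}'$ (which holds since $\mathcal{I}\in C^1$); the Minty--Browder theorem on the reflexive separable space $E_{0}^{\alpha,p}$ (Proposition \ref{FC-FEprop1}) then gives a bijection. To prove $(\mathcal{I}')^{-1}$ is continuous, I would take $f_n\to f$ in $(E_{0}^{\alpha,p})^{*}$ and set $u_n=(\mathcal{I}')^{-1}(f_n)$; the identity $\|u_n\|_{\alpha,p}^{p}=\langle f_n,u_n\rangle$ gives $\|u_n\|_{\alpha,p}^{p-1}\le \|f_n\|_{*}$, so $\{u_n\}$ is bounded and, by reflexivity, $u_n\rightharpoonup \bar u$ along a subsequence. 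Then $\langle \mathcal{I}'(u_n),u_n-\bar u\rangle = \langle f_n,u_n-\bar u\rangle\to 0$, so the $(S_+)$ property from part (2) upgrades this to $u_n\to\bar u$ strongly; continuity of $\mathcal{I}'$ gives $\mathcal{I}'(\bar u)=f$, i.e. $\bar u = (\mathcal{I}')^{-1}(f)$, and since every subsequence has a further subsequence with this same limit, the full sequence converges, establishing that $(\mathcal{I}')^{-1}$ is continuous.
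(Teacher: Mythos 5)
Your proposal is correct, but for parts (1) and (2) it takes a genuinely different route from the paper. The paper never touches the pointwise vector field $\xi\mapsto|\xi|^{p-2}\xi$: from H\"older's inequality alone it derives the norm-level estimate
$$\langle \mathcal{I}'(u)-\mathcal{I}'(v),\,u-v\rangle \;\ge\; \bigl(\|u\|_{\alpha,p}^{p-1}-\|v\|_{\alpha,p}^{p-1}\bigr)\bigl(\|u\|_{\alpha,p}-\|v\|_{\alpha,p}\bigr),$$
and then, for the $(S_+)$ property, it concludes only that $\|u_n\|_{\alpha,p}\to\|u\|_{\alpha,p}$ and invokes uniform convexity of $E_{0}^{\alpha,p}$ (the Radon--Riesz property: weak convergence plus convergence of norms implies strong convergence). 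That argument is shorter and needs no case distinction in $p$, but it leans on uniform convexity, which the paper asserts without proof, and its claim of \emph{strict} monotonicity is incomplete as written: the displayed inequality degenerates to $\ge 0$ whenever $\|u\|_{\alpha,p}=\|v\|_{\alpha,p}$ with $u\neq v$, so strictness would require additionally tracking the equality cases in H\"older. Your pointwise Simon-type inequalities --- $(|\xi|^{p-2}\xi-|\eta|^{p-2}\eta)(\xi-\eta)\ge 2^{2-p}|\xi-\eta|^p$ for $p\ge 2$, and the weighted version combined with H\"older with exponents $2/p$ and $2/(2-p)$ for $1<p<2$ --- repair exactly this point: they yield honest strict monotonicity, a quantitative modulus for the $(S_+)$ property, and they bypass the Radon--Riesz property entirely, at the cost of the case split and of two elementary but nontrivial inequalities that you should state with proof or a precise citation. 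Part (3) is essentially identical in both treatments (injectivity from strict monotonicity, coercivity $\langle \mathcal{I}'(u),u\rangle/\|u\|_{\alpha,p}=\|u\|_{\alpha,p}^{p-1}\to\infty$ plus Minty--Browder for surjectivity, and the $(S_+)$ property for continuity of the inverse); your version is in fact a little more careful than the paper's, since you justify boundedness of $u_n=(\mathcal{I}')^{-1}(f_n)$ via $\|u_n\|_{\alpha,p}^{p-1}\le\|f_n\|_{*}$, identify the weak limit as $(\mathcal{I}')^{-1}(f)$ through continuity of $\mathcal{I}'$, and pass from subsequences to the full sequence, steps the paper compresses or leaves implicit.
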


\begin{proof}
{\bf (1)} $\forall u\in E_{0}^{\alpha ,p}$ we have
$$
\begin{aligned}
\|\mathcal{I}'(u)\|_{(E_{0}^{\alpha, p})^*} & = \sup_{\|v\|_{\alpha, p}\leq 1} |\langle \mathcal{I}'(u), v \rangle|\\
&\leq \sup_{\|v\|_{\alpha, p} \leq 1} \left( \int_{0}^{T} |{_{0}}D_{t}^{\alpha}u(t)|^{(p-1)q} \right)^{1/q} \left( \int_{0}^{T} |{_{0}}D_{t}^{\alpha}v(t)|^p \right)^{1/p} \leq \|u\|_{\alpha, p}^{\frac{p}{q}}.
\end{aligned}
$$
Furthermore, for any $u,v\in E_{0}^{\alpha, p}$ by direct computation, we have
\begin{eqnarray*}
&&\langle \mathcal{I}'(u) - \mathcal{I}'(v), u-v \rangle  =  \langle \mathcal{I}'(u), u-v\rangle - \langle \mathcal{I}'(v), u-v\rangle\\
&& =  \int_{0}^{T} |{_{0}}D_{t}^{\alpha}u(t)|^{p-2}{_{0}}D_{t}^{\alpha}u(t) {_{0}}D_{t}^{\alpha}(u(t)-v(t)) dt - \int_{0}^{T} |{_{0}}D_{t}^{\alpha}v(t)|^{p-2}{_{0}}D_{t}^{\alpha}v(t){_{0}}D_{t}^{\alpha}(u(t) - v(t))dt\\
&& = \|u\|_{\alpha , p}^{p} + \|v\|_{\alpha ,p}^{p} - \int_{0}^{T}|{_{0}}D_{t}^{\alpha}u(t)|^{p-2}{_{0}}D_{t}^{\alpha}u(t){_{0}}D_{t}^{\alpha}v(t)dt - \int_{0}^{T} |{_{0}}D_{t}^{\alpha}v(t)|^{p-2}{_{0}}D_{t}^{\alpha}v(t){_{0}}D_{t}^{\alpha}u(t)dt.
\end{eqnarray*}
By H\"older inequality, it holds that
\begin{eqnarray*}
\int_{0}^{T} |{_{0}}D_{t}^{\alpha}u(t)|^{p-2}{_{0}}D_{t}^{\alpha}u(t){_{0}}D_{t}^{\alpha}v(t)dt & \leq & \|u\|_{\alpha , p}^{p-1}\|v\|_{\alpha, p},\\
\int_{0}^{T}|{_{0}}D_{t}^{\alpha}v(t)|^{p-2}{_{0}}D_{t}^{\alpha}v(t){_{0}}D_{t}^{\alpha}u(t)dt &\leq& \|v\|_{\alpha ,p}^{p-1}\|u\|_{\alpha ,p}.
\end{eqnarray*}
Therefore, we have
\begin{eqnarray}\label{M07}
\langle \mathcal{I}'(u) - \mathcal{I}'(v), u-v\rangle &\geq& \|u\|_{\alpha,p}^{p} + \|v\|_{\alpha ,p}^{p} - \|u\|_{\alpha ,p}^{p-1}\|v\|_{\alpha ,p} - \|v\|_{\alpha, p}^{p-1}\|u\|_{\alpha ,p}\nonumber\\
& = & (\|u\|_{\alpha ,p}^{p-1} - \|v\|_{\alpha ,p}^{p-1})(\|u\|_{\alpha,p} - \|v\|_{\alpha ,p}).
\end{eqnarray}
Therefore, if $u\neq v$ then inequality (\ref{M07}) implies that $\mathcal{I}'$ is strictly monotone.

\vspace{0.5cm}
\noindent
{\bf (2)} Let $\{u_n\}_n$ be a sequence in $E_{0}^{\alpha ,p}$ such that $u_n \rightharpoonup u$ and
$$
\limsup_{n\to +\infty} \langle \mathcal{I}'(u_n), u_n -u\rangle \leq 0.
$$
Since $E_{0}^{\alpha, p}$ is uniformly convex space, weak convergence and norm convergence imply strong convergence. Therefore we only need to show that $\|u_n\|_{\alpha ,p} \to \|u\|_{\alpha,p}$. In fact, since $\mathcal{I}'(u)$ is an element of $(E_{0}^{\alpha ,p})^*$, then by weak convergence
$$
\lim_{n \to +\infty} \langle \mathcal{I}'(u), u_n - u \rangle = 0.
$$
Therefore, by (\ref{M07})
\begin{eqnarray*}
0&\geq& \limsup_{n \to +\infty} \langle \mathcal{I}'(u_n) - \mathcal{I}'(u), u_n-u \rangle \\
&\geq& \limsup_{n \to +\infty} (\|u_n\|_{\alpha ,p}^{p-1} - \|u\|_{\alpha .p}^{p-1})(\|u_n\|_{\alpha ,p} - \|u\|_{\alpha ,p})  \geq 0.
\end{eqnarray*}
Then $\|u_n\|_{\alpha ,p} \to \|u\|_{\alpha ,p}$, which implies that $u_n \to u$ in $E_{0}^{\alpha ,p}$.

\vspace{.5cm}
\noindent
{\bf (3)} Since $\mathcal{I}'$ is strictly monotone, then $\mathcal{I}'$ is injective. Furthermore, since
$$
\lim_{\|u\|_{\alpha ,p} \to \infty} \frac{\langle \mathcal{I}'(u), u\rangle}{\|u\|_{\alpha, p}} = \lim_{\|u\|_{\alpha ,p} \to \infty} \frac{\int_{0}^{T}|{_{0}}D_{t}^{\alpha}u(t)|^pdt}{\|u\|_{\alpha, p}} = \infty,
$$
then $\mathcal{I}'$ is coercive, thus $\mathcal{I}'$ is a surjection in view of Minty-Browder Theorem (see \cite{HB}). Therefore $\mathcal{I}'$ has an inverse mapping $(\mathcal{I}')^{-1}: (E_{0}^{\alpha,p})^* \to E_{0}^{\alpha, p}$. Now, if $f_n, f\in (E_{0}^{\alpha, p})^*$, $f_n \to f$, let $u_n = (\mathcal{I}')^{-1}(f_n)$, $u = (\mathcal{I}')^{-1}(f)$, then $\mathcal{I}'(u_n) = f_n$, $\mathcal{I}'(u) = f.$ So $\{u_n\}$ is bounded in $E_{0}^{\alpha, p}$. Without loss of generality, we can assume that $u_n \rightharpoonup \tilde{u} $. Since $f_n \to f$, then
$$
\lim_{n\to \infty} \langle \mathcal{I}'(u_n) - \mathcal{I}'(\tilde{u}), u_n - \tilde{u} \rangle = \lim_{n\to \infty} \langle f_n , u_n - \tilde{u}\rangle =0.
$$
Since $\mathcal{I}'$ is of type $(S_+)$, $u_n \to \tilde{u}$, we conclude that $u_n \to u$, so $(\mathcal{I}')^{-1}$ is continuous.
\end{proof}

Now we introduce more notations and some necessary definitions. Let $\mathfrak{B}$ be a real Banach space, $I \in C^1(\mathfrak{B},\mathbb{R})$ means that $I$ is a continuously Fr\'echet differentiable functional defined on $\mathfrak{B}$.

\begin{definition}\label{FDEdef01}
$I\in C^{1}(\mathcal{B}, \mathbb{R})$ is said to satisfy the (PS) condition if any sequence $\{u_{j}\}_{j\in \mathbb{N}} \subset \mathcal{B}$, for which $\{I(u_{j})\}_{j\in \mathbb{N}}$ is bounded and $I'(u_{j}) \to 0$ as $j \to +\infty$, possesses a convergent subsequence in $\mathcal{B}$.
\end{definition}

In order to find infinitely many solutions of (\ref{I01}), we will use the Krasnoselskii genus. Let us denote by $\Sigma$ the class of all closed subsets $A\in \mathfrak{B}\setminus \{0\}$ that are symmetric with respect to the origin, that is, $u\in A$ implies $-u\in A$.

\begin{definition}\label{kdef}
Let $A\in \Sigma$. The Krasnoselskii genus $\gamma (A)$ of $A$ is defined as being the least positive integer $k$ such that there is an odd mapping $\phi \in C(A, \mathbb{R}^k)$ such that $\phi (x) \neq 0$ for all $x\in A$. If such a $k$ does not exist we set $\gamma (A) = \infty$. Furthermore, by definition, $\gamma (\emptyset ) = 0$.
\end{definition}

In the sequel, we will establish only the properties of the genus that will be used through this work. More information on this subject may be found in \cite{AAAM}.

\begin{theorem}\label{ktm1}
Let $\mathfrak{B} = \mathbb{R}^n$ and $\partial \Omega$ be the boundary of an open, symmetric and bounded subset $\Omega \in \mathbb{R}^n$ with $0\in \Omega$. Then $\gamma (\partial \Omega) = n$.
\end{theorem}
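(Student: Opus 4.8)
The plan is to establish the two inequalities $\gamma(\partial\Omega)\le n$ and $\gamma(\partial\Omega)\ge n$ separately. First I would record that $\partial\Omega$ is an admissible set, i.e. $\partial\Omega\in\Sigma$: it is closed (a topological boundary is always closed), symmetric (because $\Omega$ is symmetric, so is $\overline\Omega$, hence so is $\partial\Omega=\overline\Omega\setminus\Omega$), and it does not contain the origin (since $0\in\Omega$ and $\Omega$ is open, $0$ is an interior point, so $0\notin\partial\Omega$). Thus $\gamma(\partial\Omega)$ is well defined by Definition \ref{kdef}.

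For the upper bound the idea is simply to exhibit an explicit odd, continuous, nonvanishing test map into $\mathbb{R}^n$. The inclusion $\iota:\partial\Omega\to\mathbb{R}^n$, $\iota(x)=x$, is continuous and odd, and $\iota(x)=x\neq 0$ for every $x\in\partial\Omega$ by the previous paragraph. Hence $\iota\in C(\partial\Omega,\mathbb{R}^n)$ is admissible in the sense of Definition \ref{kdef}, and therefore $\gamma(\partial\Omega)\le n$.

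The lower bound $\gamma(\partial\Omega)\ge n$ is the main obstacle and is where the Borsuk--Ulam phenomenon enters. I argue by contradiction: assume $\gamma(\partial\Omega)=m\le n-1$ (we may take $n\ge 2$, the case $n=1$ being immediate, since any nonempty $A\in\Sigma$ has $\gamma(A)\ge 1$). Then there is an odd $\varphi\in C(\partial\Omega,\mathbb{R}^m)$ with $\varphi(x)\neq 0$ on $\partial\Omega$; composing with the inclusion $\mathbb{R}^m\hookrightarrow\mathbb{R}^{n-1}$ I view $\varphi:\partial\Omega\to\mathbb{R}^{n-1}\setminus\{0\}$. Using the Tietze extension theorem componentwise I extend $\varphi$ to a continuous $\psi:\overline\Omega\to\mathbb{R}^{n-1}$, and then symmetrize by setting $\Phi(x)=\tfrac12\bigl(\psi(x)-\psi(-x)\bigr)$; since $\Omega$ is symmetric this is well defined on $\overline\Omega$, it is continuous and odd, and it agrees with $\varphi$ on $\partial\Omega$ because $\varphi$ is odd.

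Finally I view $\Phi$ as a map $\overline\Omega\to\mathbb{R}^{n-1}\times\{0\}\subset\mathbb{R}^n$ and apply Brouwer degree theory. On $\partial\Omega$ we have $\Phi=\varphi\neq 0$, so $0\notin\Phi(\partial\Omega)$ and $\deg(\Phi,\Omega,0)$ is defined; by Borsuk's theorem (the degree form of Borsuk--Ulam, applicable since $\Omega$ is a bounded symmetric open neighborhood of $0$ and $\Phi$ is odd) this degree is an odd integer, hence nonzero. On the other hand $\Phi(\overline\Omega)\subset\{x_n=0\}$ has empty interior, so for $y=\varepsilon e_n$ with $\varepsilon\neq 0$ small we have $y\notin\Phi(\overline\Omega)$ and thus $\deg(\Phi,\Omega,y)=0$. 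The segment joining $0$ to $\varepsilon e_n$ meets $\Phi(\partial\Omega)$ at most at $0$, which is excluded, so $0$ and $y$ lie in the same connected component of $\mathbb{R}^n\setminus\Phi(\partial\Omega)$; by the invariance of the degree on components, $\deg(\Phi,\Omega,0)=\deg(\Phi,\Omega,y)=0$, contradicting its oddness. This forces $m\ge n$, i.e. $\gamma(\partial\Omega)\ge n$, and combined with the upper bound gives $\gamma(\partial\Omega)=n$. The crux is precisely the invocation of Borsuk's theorem; everything else is routine bookkeeping with the defining properties of the genus and of the degree.
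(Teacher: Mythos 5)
Your proposal is correct, but it cannot be compared line-by-line with the paper for a simple reason: the paper offers no proof of this theorem at all. It is stated as a known property of the Krasnoselskii genus, with the reader referred to Ambrosetti--Malchiodi \cite{AAAM} for details (the same holds for Corollary \ref{kcor} and Clarke's Theorem \ref{kcl}). What you have done is reconstruct the standard textbook argument that the citation points to, and you have done it accurately: the admissibility check ($\partial\Omega$ closed, symmetric, avoiding $0$), the upper bound via the inclusion $\iota(x)=x$ as the odd nonvanishing test map into $\mathbb{R}^n$, and the lower bound by contradiction through Tietze extension, odd symmetrization $\Phi(x)=\tfrac12\bigl(\psi(x)-\psi(-x)\bigr)$, and Borsuk's theorem on the oddness of $\deg(\Phi,\Omega,0)$, played off against the vanishing of the degree at points $\varepsilon e_n$ off the hyperplane containing $\Phi(\overline\Omega)$, with the component-constancy of the degree bridging the two values. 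Each of these steps is sound, including the two details that are easy to get wrong: that $\Phi$ agrees with $\varphi$ on $\partial\Omega$ precisely because $\varphi$ is odd and $\partial\Omega$ is symmetric, and that the segment $[0,\varepsilon e_n]$ misses $\Phi(\partial\Omega)$ so that the degrees at $0$ and at $\varepsilon e_n$ coincide. The only caveat worth stating is one you already acknowledge: the entire weight of the proof rests on Borsuk's degree theorem, which is of comparable depth to the statement being proved; quoting it is standard practice, and in that sense your argument is exactly the proof the paper delegates to the literature rather than an alternative route.
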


\begin{corollary}\label{kcor}
$\gamma (S^{n-1}) = n$.
\end{corollary}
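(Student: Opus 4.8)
The plan is to recognize $S^{n-1}$ as the boundary of a suitable set and then invoke Theorem~\ref{ktm1} directly. The natural candidate is the open unit ball $\Omega = \{x \in \mathbb{R}^n : |x| < 1\}$, whose topological boundary is precisely the unit sphere $\partial\Omega = S^{n-1} = \{x \in \mathbb{R}^n : |x| = 1\}$. Thus it suffices to check that this $\Omega$ satisfies all the hypotheses of that theorem, after which the conclusion $\gamma(S^{n-1}) = n$ follows at once.

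I would verify the four requirements one by one. The ball $\Omega$ is open, being the preimage of the open interval $(-\infty,1)$ under the continuous map $x \mapsto |x|$; it is bounded by construction; it is symmetric with respect to the origin, since $|-x| = |x|$ forces $x \in \Omega \Rightarrow -x \in \Omega$; and it contains the origin because $|0| = 0 < 1$. With these verifications in hand, Theorem~\ref{ktm1} applies to $\Omega$ and yields immediately $\gamma(S^{n-1}) = \gamma(\partial\Omega) = n$.

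There is essentially no substantive obstacle in this argument: the corollary is merely the specialization of Theorem~\ref{ktm1} to the unit ball, and every nontrivial step, namely the construction of an odd map into $\mathbb{R}^n$ together with a Borsuk--Ulam type argument showing that no odd map into $\mathbb{R}^{n-1}$ avoids the origin, has already been carried out in the proof of that theorem. The only point requiring even minimal care is the elementary identification of $S^{n-1}$ with the boundary of the open unit ball.
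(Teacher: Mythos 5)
Your proof is correct and follows exactly the route the paper intends: the paper states this corollary without proof as an immediate consequence of Theorem~\ref{ktm1}, and your verification that the open unit ball is open, bounded, symmetric, and contains the origin, with boundary $S^{n-1}$, is precisely the specialization required.
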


As a consequence of this, if $\mathfrak{B}$ is of infinite dimension and separable and $S$ is the unit sphere in $\mathfrak{B}$ , then $\gamma (S) = \infty$. We now establish a result due to Clarke \cite{DC}

\begin{theorem}\label{kcl}
Let $I \in C^1(\mathfrak{B}, \mathbb{R})$ be a functional satisfying the (PS)-condition. Furthermore, let us suppose that:
\begin{enumerate}
\item[(i)] $I$ is bounded from below and even,
\item[(ii)] there is a compact set $K\in \Sigma$ such that $\gamma (K) = k$ and $\sup_{u\in K} I(u)<0$.
\end{enumerate}
Then $I$ possesses at least $k$ pairs of distinct critical points and their corresponding critical values are less than $I(0)$.
\end{theorem}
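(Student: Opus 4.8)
The plan is to prove this classical theorem of Clarke by a minimax construction over symmetric compact sets of prescribed genus, combined with an equivariant (odd) deformation argument; throughout I use the standing normalization $I(0)=0$, consistent with hypothesis (ii). First I would introduce, for each $j=1,\dots,k$, the family
\[
\Gamma_j = \{A \subset \mathfrak{B} : A \text{ compact},\ A=-A,\ 0\notin A,\ \gamma(A)\geq j\},
\]
and the associated minimax levels
\[
c_j = \inf_{A\in\Gamma_j}\ \sup_{u\in A} I(u).
\]
Since $\Gamma_{j+1}\subset\Gamma_j$ one immediately gets $c_1\le c_2\le\cdots\le c_k$. Finiteness of these numbers comes from the two hypotheses: boundedness from below in (i) gives $c_j\ge \inf_{\mathfrak{B}} I>-\infty$, while the compact set $K$ of (ii), having $\gamma(K)=k\ge j$, belongs to each $\Gamma_j$ and forces $c_j\le\sup_{u\in K}I(u)<0$. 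Hence $-\infty<c_1\le\cdots\le c_k<0$.

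The core of the argument is the claim that a coincidence of levels produces extra genus in the critical set: if $c_{j}=c_{j+1}=\cdots=c_{j+p-1}=c$, then $\gamma(K_c)\ge p$, where $K_c=\{u : I(u)=c,\ I'(u)=0\}$. To prove this I would argue by contradiction. Since $c<0=I(0)$ and $0$ is the only critical point at level $I(0)$, the set $K_c$ is a symmetric subset of $\mathfrak{B}\setminus\{0\}$, and it is compact by the (PS) condition; so the neighborhood property of the genus provides a symmetric open set $N\supset K_c$ with $\gamma(\overline N)=\gamma(K_c)$. Assuming $\gamma(K_c)\le p-1$, the (PS) condition together with the evenness of $I$ yields an equivariant deformation lemma: there exist $\varepsilon>0$ and an odd homeomorphism $\eta$ of $\mathfrak{B}$ with $\eta(\{I\le c+\varepsilon\}\setminus N)\subset\{I\le c-\varepsilon\}$. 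Choosing $A\in\Gamma_{j+p-1}$ with $\sup_A I<c+\varepsilon$ (possible since $c=c_{j+p-1}$), the set $A\setminus N$ is compact, and subadditivity of the genus gives $\gamma(A\setminus N)\ge\gamma(A)-\gamma(\overline N)\ge (j+p-1)-(p-1)=j$. As $\eta$ is odd and continuous, $\eta(A\setminus N)$ is a symmetric compact set avoiding $0$ with genus $\ge j$, hence $\eta(A\setminus N)\in\Gamma_j$, yet $\sup_{\eta(A\setminus N)}I\le c-\varepsilon<c=c_j$, contradicting the definition of $c_j$.

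With the claim in hand the conclusion is bookkeeping. If the values $c_1,\dots,c_k$ are pairwise distinct, each $K_{c_j}$ is nonempty; since $I$ is even, each critical value is realized by a symmetric pair $\pm u_j$, giving $k$ pairs of distinct critical points. If instead some level $c$ has multiplicity $p\ge 2$, the claim gives $\gamma(K_c)\ge 2$, and a symmetric set of genus at least two contains infinitely many points, hence infinitely many pairs; in either case $I$ possesses at least $k$ pairs of distinct critical points. Finally, all these critical values lie in $[c_1,c_k]\subset(-\infty,0)$, and since the origin is the trivial critical point at level $I(0)=0$, the values are strictly below $I(0)$ and the critical points produced are nontrivial.

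The main obstacle is the equivariant deformation lemma invoked above: one must construct a negative pseudo-gradient flow for $I$ on the Banach space $\mathfrak{B}$, use the (PS) condition both to guarantee that sublevel sets can be pushed below a level $c$ away from the critical set and to ensure that $K_c$ is compact, and crucially exploit the evenness of $I$ so that the flow, hence $\eta$, is odd and the genus is preserved under the deformation. Everything else reduces to the standard monotonicity, subadditivity, neighborhood, and ``$\gamma\ge 2\Rightarrow$ infinite'' properties of the Krasnoselskii genus.
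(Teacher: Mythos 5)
The paper never proves Theorem \ref{kcl}: it is quoted as a known result of Clarke \cite{DC} and then used as a black box in the proof of Theorem \ref{multiplicity}, so there is no internal proof to compare yours against; what you have written is a reconstruction of Clarke's original argument. Your reconstruction is the standard one and is essentially correct: the minimax levels $c_j$ over compact symmetric sets of genus at least $j$, their finiteness and negativity (boundedness from below plus the set $K$ of hypothesis (ii)), the key estimate $\gamma(K_c)\geq p$ when $c_j=\cdots=c_{j+p-1}=c$ (via compactness of $K_c$ from (PS), the neighborhood property, subadditivity of the genus, and an odd deformation), and the final dichotomy (either $k$ distinct negative levels, or a repeated level whose critical set has genus $\geq 2$ and is therefore infinite) are exactly the ingredients of the classical proof. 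The one ingredient you invoke without proof is the equivariant deformation lemma; you flag it explicitly and identify what it needs (an odd pseudo-gradient field, (PS) both for compactness of $K_c$ and to push sublevel sets down), which is the same modular structure as in Clarke's paper and in Rabinowitz's book, so I regard this as acceptable rather than a gap.

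One substantive remark in your favor: your standing normalization $I(0)=0$ is not cosmetic. As stated in the paper, with hypothesis (ii) reading $\sup_{u\in K}I(u)<0$ but no assumption on $I(0)$, the theorem is actually false: on $\mathfrak{B}=\mathbb{R}$ take $I(u)=u^2-1$, which is even, bounded below, satisfies (PS), and admits $K=\{\pm 1/2\}$ with $\gamma(K)=1$ and $\sup_K I=-3/4<0$, yet its only critical point is $0$, with critical value equal to $I(0)$, so there is not even one pair of distinct critical points below $I(0)$. The hypothesis $I(0)=0$ (present in Clarke's and Rabinowitz's formulations) is what guarantees $0\notin K_{c_j}$, which your argument uses when applying the neighborhood property and when concluding $\eta(A\setminus N)\in\Gamma_j$. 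Adding it is therefore a necessary correction to the statement, and it is harmless for the paper's application, since there $I(0)=0$ holds by $(f_1)$.
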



\section{Main Results}

In this section, we study the existence of solutions for the fractional boundary value problem given by (\ref{I01}). Let the functional $I: E_{0}^{\alpha, p} \to \mathbb{R}$ defined by
\begin{equation}\label{M01}
I(u) = \frac{1}{p} \int_{0}^{T} |{_{0}}D_{t}^{\alpha}u(t)|^pdt - \int_{0}^{T}F(t,u(t))dt.
\end{equation}

\begin{lemma}\label{Mlm1}
Assume that $(f_1)$ hold. Then the functional $I$ is well defined and of class $C^1(E_{0}^{\alpha, p}, \mathbb{R})$ and
$$
\langle  I'(u), v\rangle = \int_{0}^{T} |{_{0}}D_{t}^{\alpha}u(t)|^{p-2}{_{0}}D_{t}^{\alpha}u(t) {_{0}}D_{t}^{\alpha}v(t)dt - \int_{0}^{T}f(t,u(t))v(t)dt,
$$
for all $u, v \in E_{0}^{\alpha, p}$, which yields that
\begin{equation}\label{M02}
\langle I'(u), u\rangle = \int_{0}^{T} |{_{0}}D_{t}^{\alpha}u(t)|^pdt - \int_{0}^{T}f(t,u(t))u(t)dt.
\end{equation}
\end{lemma}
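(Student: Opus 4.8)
The plan is to show that the energy functional $I$ decomposes as $I = \mathcal{I} - \Psi$, where $\mathcal{I}(u) = \frac{1}{p}\int_0^T |{_0}D_t^\alpha u(t)|^p dt$ is the $p$-Dirichlet term already handled in Theorem \ref{Mth01} (so $\mathcal{I}\in C^1$ with derivative given by \x{der}), and $\Psi(u) = \int_0^T F(t,u(t))\,dt$ is the nonlinear potential term. Thus everything reduces to proving that $\Psi$ is well defined and of class $C^1$ on $E_0^{\alpha,p}$ with Gateaux derivative $\langle \Psi'(u),v\rangle = \int_0^T f(t,u(t))v(t)\,dt$; the formula for $\langle I'(u),v\rangle$ then follows by subtraction, and \x{M02} is obtained by setting $v=u$.

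First I would verify that $\Psi$ is well defined. From $(f_1)$ we have $|F(t,u)| \le \int_0^{|u|} qb(t)|s|^{q-1}\,ds = b(t)|u|^q \le \|b\|_\infty |u|^q$, so $\Psi(u) \le \|b\|_\infty \|u\|_{L^q}^q$. Since $1 < q < p$ and $[0,T]$ has finite measure, $L^p[0,T] \hookrightarrow L^q[0,T]$ continuously, and Proposition \ref{FC-FEprop3} gives $\|u\|_{L^p} \le \frac{T^\alpha}{\Gamma(\alpha+1)}\|u\|_{\alpha,p}$; hence $\Psi(u)$ is finite for every $u\in E_0^{\alpha,p}$. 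Next I would compute the Gateaux derivative: for fixed $u,v$ and $t$, the mean value theorem gives $\frac{F(t,u(t)+\lambda v(t)) - F(t,u(t))}{\lambda} = f(t, u(t)+\theta_\lambda(t) v(t))\,v(t)$ for some $\theta_\lambda(t)\in(0,1)$, and the bound $|f(t,u)|\le qb(t)|u|^{q-1}$ produces an integrable dominating function uniformly for small $\lambda$ (using $q-1<p-1$ together with the $L^p$ embedding and Hölder), so dominated convergence yields $\langle \Psi'(u),v\rangle = \int_0^T f(t,u(t))v(t)\,dt$.

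Finally I would establish continuity of $u\mapsto \Psi'(u)$, which upgrades Gateaux to Fréchet differentiability and gives $\Psi\in C^1$. Suppose $u_n\to u$ in $E_0^{\alpha,p}$. By Theorem \ref{comp} the embedding $E_0^{\alpha,p}\hookrightarrow L^p[0,T]$ is compact, so $u_n\to u$ in $L^p[0,T]$, hence along a subsequence $u_n\to u$ a.e. with an $L^p$-dominating function. The Nemytskii operator $u\mapsto f(\cdot,u(\cdot))$ then maps $L^p[0,T]$ continuously into $L^{p'}[0,T]$ with $\frac1p+\frac1{p'}=1$: this follows from the growth $|f(t,u)|\le qb(t)|u|^{q-1}$ with $q-1 < p-1 = p/p'$, giving the required $L^p\to L^{p'}$ continuity by the standard Krasnoselskii theorem. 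Consequently
$$
\|\Psi'(u_n)-\Psi'(u)\|_{(E_0^{\alpha,p})^*} = \sup_{\|v\|_{\alpha,p}\le 1}\left|\int_0^T \big(f(t,u_n)-f(t,u)\big)v\,dt\right| \le C\,\|f(\cdot,u_n)-f(\cdot,u)\|_{L^{p'}} \to 0,
$$
where $C$ absorbs the embedding constant. The main obstacle is precisely this last step: one must handle the subsequence-a.e.-convergence carefully and invoke continuity of the Nemytskii operator (a subsequence argument shows the whole sequence converges, since every subsequence has a further subsequence along which $\Psi'$ converges to the same limit $\Psi'(u)$). Combining $\mathcal{I}\in C^1$ from Theorem \ref{Mth01} with $\Psi\in C^1$ completes the proof.
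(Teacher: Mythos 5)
Your proposal is correct and follows essentially the same route as the paper: the same decomposition $I=\mathcal{I}-H$ (your $\Psi$), the same bound $|F(t,\xi)|\le b(t)|\xi|^q$ for well-definedness, the same mean value theorem plus dominated convergence argument for the Gateaux derivative, and the same dual-pairing estimate for continuity of the derivative of the potential term. The only cosmetic difference is in that last step, where the paper proves Nemytskii-type continuity by hand (equi-integrability in $L^{q'}$ plus the Vitali convergence theorem, paired against $L^q$), while you pair against $L^p$ and cite the standard Krasnoselskii continuity theorem for the Nemytskii operator $L^p\to L^{p'}$ --- the same fact, simply black-boxed.
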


\begin{proof}
By ($f_1$), one has
\begin{equation}\label{M03}
|F(t,\xi)| \leq b(t)|\xi|^{q},\;\;\forall (t,\xi)\in [0,T]\times \mathbb{R}.
\end{equation}
For any $u\in E_{0}^{\alpha, p}$, it follows from (\ref{FC-FEeq3}) and (\ref{M03}) that
\begin{equation}\label{M03-}
\begin{aligned}
\int_{0}^{T} |F(t,u(t))|dt &\leq \int_{0}^{T} b(t)|u(t)|^qdt \leq \|b\|_{\infty}\|u\|_{L^q}^{q}\\
&\leq \frac{T^{\frac{p-q}{p} + \alpha q}\|b\|_{\infty}}{(\Gamma (\alpha +1))^q} \|u\|_{\alpha, p}^{q}.
\end{aligned}
\end{equation}
Therefore, $I$ defined by (\ref{M01}) is well defined on $E_{0}^{\alpha, p}$.

Now, for any function $\theta : [0,T] \to (0,1)$, by ($f_1$), (\ref{FC-FEeq3}) and the H\"older inequality, we have
\begin{equation}\label{M04}
\begin{aligned}
&\int_{0}^{T} \max_{h \in [0,1]}|f(t, u(t) + \theta (t)h v(t))v(t)|dt \leq \int_{0}^{T} \max_{h\in [0,1]} |f(t, u(t) + \theta(t)hv(t))||v(t)|dt\\
&\leq q\|b\|_{\infty}\int_{0}^{T}(|u(t)| + |v(t)|)^{q-1} |v(t)|dt\\
&\leq q2^{q-1}\|b\|_{\infty}\left[ \left( \int_{0}^{T} |u(t)|^pdt \right)^{\frac{q-1}{p}} \left( \int_{0}^{T} |v(t)|^{\frac{p}{p-q+1}}dt \right)^{\frac{p-q+1}{p}} + \|v\|_{L^q}^{q}\right]\\
&\leq \frac{q\|b\|_\infty 2^{q-1} T^{\frac{p-q}{p} + \alpha q}}{(\Gamma (\alpha +1))^q} (\|u\|_{\alpha, p}^{q-1} + \|v\|_{\alpha, p}^{q-1})\|v\|_{\alpha, p} <+\infty.
\end{aligned}
\end{equation}
Let the functional $H: E_{0}^{\alpha, p} \to \mathbb{R}$ defined by
$$H(u) = \int_{0}^{T} F(t,u(t))dt.$$
Then, by the Mean Value Theorem, (\ref{M04}) and Lebesgue's Dominated Convergence Theorem, we have
\begin{equation}\label{M05}
\lim_{h\to 0^+} \frac{H(u + hv) - H(u)}{h} = \lim_{h\to 0^+} \int_{0}^{T} f(t, u(t) + \theta (t) h v(t))v(t)dt = \int_{0}^{T}f(t,u(t))v(t)dt.
\end{equation}

Let's prove now that $H'$ is continuous. Let $\{u_n\}_{n\in \mathbb{N}} , u \in E_{0}^{\alpha, p}$ such that $u_n \to u$ strongly in $E_{0}^{\alpha, p}$ as $n\to \infty$. Then $u_k \to u$ in $L^p[0,T]$, and so
$$
\lim_{n\to \infty}u_n(t) = u(t),\;\;\mbox{a.e.}\;\;t\in \mathbb{R}.
$$
By ($f_1$), for any bounded subinterval $\Omega \subset [0,T]$,
\begin{equation}\label{M06}
\begin{aligned}
\int_{\Omega} |f(t, u_n(t))|^{q'}dt& \leq q^{q'}\|b\|_{\infty}^{q'} \int_{\Omega} |u_n(t)|^{(q-1)q'}dt \\
&\leq \frac{q^{q'}T^{\alpha q}}{(\Gamma (\alpha +1))^q}\|b\|_{\infty}^{q'}\|u_n\|_{\alpha ,p}^{q} |\Omega|^{\frac{p-q}{p}} \leq C |\Omega|^{\frac{p-q}{p}}.
\end{aligned}
\end{equation}
It follows from (\ref{M06}) that the sequence $\{|f(t,u_n) - f(t, u)|^{q'}\}$ is uniformly bounded and equi-integrable in $L^1(\Omega)$. The Vitali Convergence Theorem implies
$$
\lim_{n\to \infty} \int_{\Omega} |f(t,u_n(t)) - f(t,u(t))|^{q'}dt = 0
$$
So, by H\"older inequality and (\ref{FC-FEeq3}) , we obtain
\begin{eqnarray*}
\|H'(u_n) - H'(u)\|_{(E_{0}^{\alpha, p})^*} & = & \sup_{v\in E_{0}^{\alpha, p}, \|v\|_{\alpha, p} = 1} \left| \int_{0}^{T} (f(t,u_n(t)) - f(t,u(t)))v(t)dt \right|\\
&\leq& \|f(t,u_n) - f(t,u)\|_{L^{q'}}\|v\|_{L^q}\\
&\leq& \frac{T^{\frac{p-q + \alpha pq}{pq}}}{\Gamma (\alpha +1)}\|f(t,u_n) - f(t,u)\|_{L^{q'}} \to 0
\end{eqnarray*}
as $n\to \infty$. Therefore by (\ref{der}) and (\ref{M05}), $I\in C^1(E_{0}^{\alpha, p}, \mathbb{R})$ and
$$\langle  I'(u), v\rangle = \int_{0}^{T} |{_{0}}D_{t}^{\alpha}u(t)|^{p-2}{_{0}}D_{t}^{\alpha}u(t) {_{0}}D_{t}^{\alpha}v(t)dt - \int_{0}^{T}f(t,u(t))v(t)dt.$$
\end{proof}

\begin{lemma}\label{Mlm2}
If ($f_1$) and ($f_2$) hold, then $I$ satisfies (PS)-condition.
\end{lemma}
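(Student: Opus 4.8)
The plan is to verify the (PS) condition by the classical three-step scheme: first show that any (PS) sequence is bounded in $E_{0}^{\alpha,p}$, then pass to a weak limit and use the compactness of the embedding $E_{0}^{\alpha,p}\hookrightarrow L^{p}[0,T]$ to obtain strong $L^{p}$ convergence, and finally upgrade this to strong convergence in $E_{0}^{\alpha,p}$ by exploiting that $\mathcal{I}'$ is of type $(S_+)$ (Theorem \ref{Mth01}(2)).

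Let $\{u_n\}\subset E_{0}^{\alpha,p}$ be a (PS) sequence, so that $I(u_n)$ is bounded and $I'(u_n)\to 0$ in $(E_{0}^{\alpha,p})^*$. For boundedness I would combine the energy with its derivative using $(f_2)$: since $f(t,u)u\le \mu F(t,u)$ with $1<\mu\le q<p$, one has
\[
\mu I(u_n)-\langle I'(u_n),u_n\rangle=\Big(\tfrac{\mu}{p}-1\Big)\|u_n\|_{\alpha,p}^p+\int_0^T\big[f(t,u_n)u_n-\mu F(t,u_n)\big]\,dt\le \Big(\tfrac{\mu}{p}-1\Big)\|u_n\|_{\alpha,p}^p.
\]
Because $\tfrac{\mu}{p}-1<0$, rearranging gives $\big(1-\tfrac{\mu}{p}\big)\|u_n\|_{\alpha,p}^p\le \langle I'(u_n),u_n\rangle-\mu I(u_n)\le \varepsilon_n\|u_n\|_{\alpha,p}+\mu C$, where $\varepsilon_n=\|I'(u_n)\|_{(E_{0}^{\alpha,p})^*}\to 0$ and $|I(u_n)|\le C$. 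Since $p>1$, the $p$-th power on the left dominates the at-most-linear right-hand side, so $\{u_n\}$ is bounded. (Alternatively, $(f_1)$ alone already forces coercivity of $I$: by (\ref{M03-}) one has $I(u)\ge \tfrac1p\|u\|_{\alpha,p}^p-C\|u\|_{\alpha,p}^q$ with $q<p$, which yields the boundedness of $\{u_n\}$ directly from the bound on $I(u_n)$.)

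Since $E_{0}^{\alpha,p}$ is reflexive (Proposition \ref{FC-FEprop1}), up to a subsequence $u_n\rightharpoonup u$ in $E_{0}^{\alpha,p}$. By Theorem \ref{comp} the embedding into $L^{p}[0,T]$ is compact, and crucially this holds for every $\alpha\in(0,1)$, which is exactly what lets us dispense with the usual restriction $\alpha>1/p$; hence $u_n\to u$ in $L^{p}[0,T]$, and therefore also in $L^{q}[0,T]$ since $q<p$ on the bounded interval $[0,T]$. Writing $\langle I'(u),v\rangle=\langle\mathcal{I}'(u),v\rangle-\int_0^T f(t,u)v\,dt$, I would then show the nonlinear term is negligible along $u_n-u$: by H\"older with $\tfrac1q+\tfrac1{q'}=1$ together with the growth bound in $(f_1)$ and the identity $(q-1)q'=q$,
\[
\Big|\int_0^T f(t,u_n)(u_n-u)\,dt\Big|\le \|f(\cdot,u_n)\|_{L^{q'}}\|u_n-u\|_{L^q}\le q\|b\|_{\infty}\|u_n\|_{L^q}^{q-1}\|u_n-u\|_{L^q}\to 0,
\]
the last factor tending to $0$ while $\|u_n\|_{L^q}^{q-1}$ stays bounded. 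Combined with $\langle I'(u_n),u_n-u\rangle\to 0$ (from $I'(u_n)\to0$ and boundedness of $\{u_n-u\}$), this gives $\langle \mathcal{I}'(u_n),u_n-u\rangle\to 0$, so in particular $\limsup_{n}\langle\mathcal{I}'(u_n),u_n-u\rangle\le 0$.

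Finally, invoking the $(S_+)$ property of $\mathcal{I}'$ (Theorem \ref{Mth01}(2)) with $u_n\rightharpoonup u$ yields $u_n\to u$ strongly in $E_{0}^{\alpha,p}$, which establishes the (PS) condition. The main obstacle is not any single estimate but the two structural inputs that replace the compactness machinery otherwise available only for $\alpha>1/p$: the compact embedding of Theorem \ref{comp}, which supplies the $L^q$ convergence needed to annihilate the nonlinear term, and the $(S_+)$ property, which converts the resulting weak convergence into norm convergence. The verification that $\int_0^T f(t,u_n)(u_n-u)\,dt\to 0$ is the one place where the subcritical growth $q<p$ in $(f_1)$ is genuinely used.
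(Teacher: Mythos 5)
Your proof is correct, and its second half takes a genuinely different route from the paper's. The boundedness step is essentially identical: the paper derives the same identity
$\left(1-\tfrac{\mu}{p}\right)\|u_n\|_{\alpha,p}^{p}=\langle I'(u_n),u_n\rangle-\mu I(u_n)+\int_0^T\left[f(t,u_n)u_n-\mu F(t,u_n)\right]dt$
and bounds it by $M\|u_n\|_{\alpha,p}+\mu M$ using $(f_2)$, exactly as you do (your parenthetical remark that $(f_1)$-coercivity alone suffices is also valid, and is in fact the mechanism the paper uses later in the proof of Theorem \ref{exis}, though not here). Where you diverge is the upgrade from weak to strong convergence. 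The paper asserts that $u_n\rightharpoonup u$ implies $H'(u_n)\to H'(u)$ in $(E_{0}^{\alpha,p})^*$, i.e.\ that $H'$ is completely continuous, then writes $\mathcal{I}'(u_n)=I'(u_n)+H'(u_n)\to H'(u)$ and invokes part (3) of Theorem \ref{Mth01} ($\mathcal{I}'$ is a homeomorphism) to get $u_n=(\mathcal{I}')^{-1}(\mathcal{I}'(u_n))\to u$. You instead make the role of the compact embedding (Theorem \ref{comp}) explicit: strong $L^q$ convergence plus the H\"older/growth estimate kills $\int_0^T f(t,u_n)(u_n-u)\,dt$, giving $\langle\mathcal{I}'(u_n),u_n-u\rangle\to 0$, and you conclude via part (2) of Theorem \ref{Mth01} (the $(S_+)$ property). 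The trade-off: the paper's argument is shorter but rests on the unjustified claim that $H'$ maps weakly convergent sequences to strongly convergent ones (Lemma \ref{Mlm1} only proves strong-to-strong continuity of $H'$; the weak-to-strong version requires rerunning that Vitali argument starting from the compact embedding). Your version replaces that assertion by an elementary explicit estimate, so it is the more self-contained of the two, at the cost of a few extra lines.
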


\begin{proof}
Let $\{u_n\}_{n\in \mathbb{N}} \subset E_{0}^{\alpha, p}$ such that $\{I(u_n)\}$ is bounded and $I'(u_n) \to 0$ as $n\to \infty$. Then there exists a constant $M>0$ such that
\begin{equation}\label{M07}
|I(u_n)| \leq M\;\;\mbox{and}\;\;\|I'(u_n)\|_{(E_{0}^{\alpha, p})^*} \leq M.
\end{equation}
Therefore
\begin{equation}\label{M08}
\begin{aligned}
\left(1- \frac{\mu}{p} \right)\|u_n\|_{\alpha, p}^{p} &= \langle I'(u_n), u_n\rangle  - \mu I(u_n)\\
&+\int_{0}^{T} f(t,u_n(t))u_n(t) - \mu F(t,u_n(t))dt\\
&\leq M\|u_n\|_{\alpha, p} + \mu M.
\end{aligned}
\end{equation}
Since $1< \mu < p$, (\ref{M08}) shows that $\{u_n\}_{n\in \mathbb{N}}$ is bounded in $E_{0}^{\alpha, p}$. Without loss of generality, we assume that $u_n  \rightharpoonup u$, then $H'(u_n) \to H'(u)$. Since $I'(u_n) = \mathcal{I}'(u_n) - H'(u_n) \to 0$ then $\mathcal{I}'(u_n) \to H'(u)$. Therefore, since $\mathcal{I}'$ is a homeomorphism, $u_n \to u$, and so $I$ satisfies (PS) condition.
\end{proof}

\noindent
\begin{proof} {\bf Theorem \ref{exis}:} In view of Lemma \ref{Mlm1}, $I\in C^1(E_{0}^{\alpha, p}, \mathbb{R})$ and by Lemma \ref{Mlm2}, $I$ satisfies the (PS) condition. Furthermore, by (\ref{M03-}) we have
\begin{eqnarray}\label{M09}
I(u) & = & \frac{1}{p}\|u\|_{\alpha, p}^{p} - \int_{0}^{T}F(t,u(t))dt\nonumber\\
& \geq & \frac{1}{p}\|u\|_{\alpha, p}^{p} - \frac{T^{\frac{p-q}{p} + \alpha q}\|b\|_{\infty}}{(\Gamma (\alpha + 1))^q}\|u\|_{\alpha, p}^{q}.
\end{eqnarray}
Since $1< q < p$, (\ref{M09}) implies that $I(u) \to +\infty$ as $\|u\|_{\alpha, p} \to +\infty$. Consequently, $I$ is bounded from bellow. Therefore, by use of a standard minimizing argument (see Theorem 2.7 in \cite{PR}), $c = \inf_{E_{0}^{\alpha, p}}I(u)$  is a critical value of $I$, that is there exists a critical point $u^* \in E_{0}^{\alpha, p}$ such that
$$
I(u^*) = d.
$$
Furthermore, let $\varphi \in C_{0}^{\infty}[0,T]$ with $\|\varphi\|_{\alpha, p} = 1$. Then, by ($f_1$) we obtain that
$$
\begin{aligned}
I(\sigma \varphi) &= \frac{|\sigma|^p}{p} - \int_{0}^{T} F(t, \sigma \varphi (t))dt\\
& \leq \frac{|\sigma|^p}{p} - |\sigma|^{q} \int_{supp \varphi} a(t)|\varphi (t)|^{q}dt,
\end{aligned}
$$
which yields that $I(\sigma \varphi) < 0$ as $|\sigma|$ small enough because $1< q < p$. That is, the critical point $u^*$ is nontrivial.
\end{proof}

Now, we will prove that the weak solution obtained by Theorem \ref{exis} is a almost everywhere solution of (\ref{I01}).

\begin{proof} { \bf Lemma \ref{regu}:} Since $C_{0}^{\infty}[0,T] \subset E_{0}^{\alpha, p}$, we have
\begin{equation}\label{M10}
\int_{0}^{T} |{_{0}}D_{t}^{\alpha}u(t)|^{p-2}{_{0}}D_{t}^{\alpha}u(t) {_{0}}D_{t}^{\alpha}v(t)dt = \int_{0}^{T} f(t,u(t))v(t)dt,
\end{equation}
for all $v\in C_{0}^{\infty}[0,T]$. Therefore, by integration by parts we obtain
\begin{equation}\label{M11}
\int_{0}^{T} \left[ {_{t}}I_{T}^{1-\alpha}(|{_{0}}D_{t}^{\alpha}u(t)|^{p-2}{_{0}}D_{t}^{\alpha}u(t)) \right]v'(t)dt = \int_{0}^{T} \left[ -\int_{0}^{t}f(\sigma, u(\sigma))d\sigma \right]v'(t)dt.
\end{equation}
So, there exists constant $C$, such that
\begin{equation}\label{M12}
{_{0}}I_{t}^{1-\alpha}(|{_{0}}D_{t}^{\alpha}u(t)|^{p-2}{_{0}}D_{t}^{\alpha}u(t)) = -\int_{0}^{t}f(\sigma, u(\sigma))d\sigma + C \;\;\mbox{almost everywhere on}\;\;[0,T].
\end{equation}
Now, we note that, since $0 < \alpha < \frac{1}{p}$, then $1-\alpha > \frac{1}{q}$, where $q$ is such that
$$
\frac{1}{p} + \frac{1}{q} = 1.
$$
Furthermore, since $u \in E_{0}^{\alpha ,p}$ then $|{_{0}}D_{t}^{\alpha}u(t)|^{p-2}{_{0}}D_{t}^{\alpha}u(t) \in L^{q}[0,T]$. Therefore, by Theorem \ref{properties}, part 5, ${_{t}}I_{T}^{1-\alpha}(|{_{0}}D_{t}^{\alpha}u(t)|^{p-2}{_{0}}D_{t}^{\alpha}u(t)) \in C[0,T]$. So
$$
{_{t}}I_{T}^{1-\alpha}(|{_{0}}D_{t}^{\alpha}u(t)|^{p-2}{_{0}}D_{t}^{\alpha}u(t)) = -\int_{0}^{t}f(\sigma, u(\sigma))d\sigma + C,
$$
everywhere on $[0,T]$. Finally by differentiation, we obtain the desired equality.
\end{proof}

\section{Multiplicity Result}

The following result, whose proof can be seen Brezis \cite{HB}, will play a key role in the proof
of our main result

\begin{theorem}\label{Mmtm1}
Let $X$ be a separable and reflexive Banach space, then there exists $\{e_n\}_{n\in \mathbb{N}} \subset X$ and $\{e_{n}^{*}\}_{n\in \mathbb{N}} \subset X^*$ such that
$$
\langle e_{n}^{*}, e_m \rangle = \delta_{n,m} = \left\{
\begin{array}{cc}
1& \mbox{if}\;\;n = m\\
0&\mbox{if}\;\;n\neq m,
\end{array}
 \right.
$$
and
$$
X = \overline{span\{e_n; 1,2,\cdots\}}\;\;\mbox{and}\;\;X^* = \overline{span\{e_{n}^{*}; 1,2,\cdots\}}.
$$
\end{theorem}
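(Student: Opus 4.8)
The plan is to produce the biorthogonal system by a Markushevich-type inductive construction, the essential input being that \emph{both} $X$ and $X^*$ are separable. Separability of $X$ is assumed; separability of $X^*$ is exactly where reflexivity is used. Since $X$ is reflexive, $X^{**}=X$, so $X^{**}$ is separable, and because a Banach space whose dual is separable is itself separable (applied here to the space $X^*$, whose dual is $X^{**}=X$), we conclude that $X^*$ is separable. Hence I may fix once and for all a sequence $\{x_n\}_{n\in\mathbb{N}}$ dense in $X$ and a sequence $\{f_n\}_{n\in\mathbb{N}}$ dense in $X^*$, and the whole problem reduces to arranging a biorthogonal system whose vector part has span containing all the $x_n$ and whose functional part has span containing all the $f_n$.

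The engine of the construction is an elementary extension step. Suppose we already have a finite biorthogonal system $\{e_1,\dots,e_n\}\subset X$, $\{e_1^*,\dots,e_n^*\}\subset X^*$ with $\langle e_i^*, e_j\rangle=\delta_{i,j}$. Given $x\in X$ with $x\notin\mathrm{span}\{e_1,\dots,e_n\}$, set $y=x-\sum_{i=1}^n\langle e_i^*,x\rangle e_i\neq 0$ and choose, by Hahn--Banach, $g\in X^*$ with $\langle g,y\rangle=1$; then $e_{n+1}:=y$ and $e_{n+1}^*:=g-\sum_{i=1}^n\langle g,e_i\rangle e_i^*$ extend the system. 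A direct check gives $\langle e_{n+1}^*,e_j\rangle=0$ and $\langle e_j^*,e_{n+1}\rangle=0$ for $j\le n$ (using $\langle e_i^*,y\rangle=0$), $\langle e_{n+1}^*,e_{n+1}\rangle=\langle g,y\rangle=1$, and $x\in\mathrm{span}\{e_1,\dots,e_{n+1}\}$. Symmetrically, given $f\in X^*$ with $f\notin\mathrm{span}\{e_1^*,\dots,e_n^*\}$, put $g=f-\sum_{i=1}^n\langle f,e_i\rangle e_i^*\neq 0$, pick $y\in X$ with $\langle g,y\rangle=1$, and set $e_{n+1}^*:=g$, $e_{n+1}:=y-\sum_{i=1}^n\langle e_i^*,y\rangle e_i$; the same computation yields biorthogonality and $f\in\mathrm{span}\{e_1^*,\dots,e_{n+1}^*\}$.

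With this step available, I would run an alternating induction that interleaves two bookkeeping counters, one scanning $\{x_n\}$ and one scanning $\{f_n\}$: at each odd stage advance the first counter to the least index $k$ whose $x_k$ is not yet in the span of the current vectors and absorb it via the vector step, and at each even stage do the same for the first still-uncaptured $f_k$ via the functional step, in each case simply passing over any $x_k$ (resp.\ $f_k$) that already lies in the relevant current span. Biorthogonality is preserved throughout, every $x_k$ is eventually absorbed, so $\{x_n\}\subset\overline{\mathrm{span}\{e_n\}}$ and hence $\overline{\mathrm{span}\{e_n\}}=X$ by density, and likewise $\overline{\mathrm{span}\{e_n^*\}}=X^*$. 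The step requiring genuine care is precisely this scheduling: one must verify that each uncaptured element is indeed absorbed at its designated stage (so the residual $y$, resp.\ $g$, is nonzero there) and that elements already in the span do not stall either counter, so that both dense sequences are exhausted in the limit. Everything beyond this bookkeeping is the finite-dimensional linear algebra recorded in the extension step.
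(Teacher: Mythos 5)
Your proof is correct, but there is no in-paper argument to compare it against: the authors do not prove Theorem \ref{Mmtm1} at all — they invoke it as a known result, stating only that its proof can be found in Brezis \cite{HB}, and then use it solely to extract the finite-dimensional subspaces $X_k=\mathrm{span}\{e_1,\dots,e_k\}$ in the genus argument for Theorem \ref{multiplicity}. What you have written is the classical Markushevich biorthogonal-system construction, i.e.\ essentially the textbook proof that the citation points to, so in substance you have reconstructed the standard argument rather than found a different route. Your write-up consumes the hypotheses in exactly the right places, which is the main thing to check here: reflexivity enters only to make $X^*$ separable (via $X^{**}=X$ separable plus the fact that a space with separable dual is separable), and this is genuinely indispensable — for $X=\ell^1$, separable but not reflexive, $X^*=\ell^\infty$ is nonseparable, so the norm closure of any countable $\mathrm{span}\{e_n^*\}$ can never be all of $X^*$ and the conclusion fails; and your alternating scheduling cannot stall, because in an infinite-dimensional space a finite-dimensional subspace is closed and proper, hence never contains either dense sequence, so at every stage an uncaptured $x_k$ (resp.\ $f_k$) exists, the least uncaptured index strictly increases, and both counters are exhausted in the limit. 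The only blemish, shared with the statement of the theorem itself, is the implicit assumption $\dim X=\infty$: for finite-dimensional $X$ no infinite biorthogonal sequence exists, and the statement must then be read with finite sequences.
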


\begin{proof} Theorem \ref{multiplicity}
We note that, by ($f_1$), $I(0) = 0$ and by ($f_3$), $I$ is an even functional. Denote by $\gamma (A)$ the genus of $A$. Set
\begin{eqnarray*}
&\Sigma =\{A \subset E_{0}^{\alpha, p} \setminus \{0\}:\;\;A \;\;\mbox{is closed in}\;\;E_{0}^{\alpha, p}\;\;\mbox{and symmetric with respect to}\;\;0\},\\
&\Sigma_k = \{A\in \Sigma:\;\;\gamma (A) \geq k\},\;\;k=1,2,\cdots,\\
&c_k = \inf_{A\in \Sigma_k} \sup_{u\in A} I(u),\;\;k = 1,2,\cdots,
\end{eqnarray*}
we have
$$
-\infty < c_1 \leq c_2 \leq \cdots\leq c_k \leq c_{k+1}\leq \cdots.
$$
Now, we will show that $c_k < 0$ for every $k\in \mathbb{N}$. Since $E_{0}^{\alpha, p}$ is a reflexive and separable Banach space, consider $\{e_n\}_{n\in \mathbb{N}}$ a Schauder basis of $E_{0}^{\alpha, p}$ given by Theorem \ref{Mmtm1}, and for each $k\in \mathbb{N}$, consider $X_k = span\{e_1, e_2,\cdots,e_k\}$, the subspace of $E_{0}^{\alpha, p}$ generated by $k$ vectors $e_1, e_2, \cdots, e_k$. Since all norms of a finite dimensional normed space are equivalent, there exists a positive constant $C(k)$ which depends on $k$, such that
$$
-C(k) \|u\|_{\alpha, p}^{q} \geq -\int_{0}^{T}|u(t)|^qdt,
$$
for all $u\in X_k$. We now use $(f_1)$ to conclude that
$$
\begin{aligned}
I(u) & = \frac{1}{p}\|u\|_{\alpha, p}^{p} - \int_{0}^{T}F(t, u(t))dt\\
&\leq \frac{1}{p}\|u\|_{\alpha, p}^{p} - \int_{0}^{T} a(t)|u(t)|^qdt\\
&\leq \|u\|_{\alpha, p}^{q} (\frac{1}{p}\|u\|_{\alpha, p}^{p-q} - \tilde{a} C(k)),
\end{aligned}
$$
where $\tilde{a} = \inf_{t\in [0,T]} a(t)$. Let $R$ be a positive constant such that
$$
\frac{1}{p}R^{p-q} < \tilde{a}C(k).
$$
So, for all $0< r <R$, and considering $K = \{u\in X_k:\;\;\|u\|_{\alpha, p} = r\}$, we get
$$
I(u) \leq r^q \left(\frac{1}{p}r^{p-q} - \tilde{a}C(k) \right) < R^q\left( \frac{1}{p}R^{p-q} - \tilde{a}C(k) \right) < 0 = I(0),
$$
which implies
$$
\sup_{K} I(u) < 0 = I(0)
$$
Since $X_k$ and $\mathbb{R}^k$ are isomorphic and $K$ and $S^{k-1}$ are homeomorphic, we conclude that $\gamma (K) = k$. Therefore, by the Clarke theorem, $I$ has at least $k$ pairs of different critical points. Since $k$ is arbitrary, we obtain infinitely many critical points of I.
\end{proof}

\textcolor[rgb]{1.00,0.00,0.00}{;;;;;;;;;;;;;;;;;;;;;;;;;;;;;;;;;;;;;;;;;;;;;;;;;;;;;;;;;;;;;;;;;;;;;;;;;;;;;;;;;;;;;;;;;;;;}

\begin{remark}\label{rpbar}
According Theorem \ref{comp} in our paper we have that $E_0^{\alpha,p}$ is compact embedding in
$L^p[0, T]$ for $\alpha \in (0, 1)$, furthermore by Proposition \ref{FC-FEprop3}, for a $\alpha \in (0, 1)$ we have the continuous
embedding of $E_0^{\alpha,p}$ to $L^{\widetilde{p}}[0, T]$ with $\widetilde{p} \in (1, \infty)$, in particular we can take
$\widetilde{p} =\frac{p}{1- \alpha p}$
and then we can get the compact embedding of  $E_0^{\alpha,p}$
into $L^q[0, T]$ with $q \in [p, \widetilde{p})$. Also, there exists $C_{\widetilde{p},q} > 0$ such that
\begin{equation}\label{pbar}
\|u\|_{L^q} \leq C_{\widetilde{p},q} \|u\|_{\alpha,p}, \;\; \forall \; u \in E_0^{\alpha,p}.
\end{equation}
\end{remark}

Now, we assume that $f\in C([0,T] \times \mathbb{R})$ satisfy
\begin{enumerate}
\item[$(S_0)$] $F(t,0) = 0$ for all $t\in [0,T]$ and $|f(t,u)| \leq qb|u|^{q -1}$ for all $(t,u)\in [0,T] \times \mathbb{R}$, where
$b > 0$ and $p \leq q < \widetilde{p}$ is a constant.
\item[$(S_1)$] There exist $\mu >p$ and $r >0$ such that, for all $|\xi| \geq r$
$$
0< \mu F(t,\xi) \leq \xi f(t,\xi);
$$
\item[$(S_2)$] $f(t,\xi) = o(|\xi|^{p-1})$ as $\xi\to 0$;
\end{enumerate}

\begin{theorem}\label{main}
Assume that $(S_0)-(S_2)$ hold, then the problem (\ref{I01}) \textcolor[rgb]{1.00,0.00,0.00}{with $\lambda = 1$} possesses a nontrivial weak solution.
\end{theorem}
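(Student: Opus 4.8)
The plan is to obtain the nontrivial solution as a critical point of the functional $I$ in (\ref{M01}) via the Mountain Pass Theorem (see Rabinowitz \cite{PR}), rather than by minimization: under $(S_0)$--$(S_2)$ the nonlinearity is superlinear at infinity (because $\mu>p$ in $(S_1)$), so $I$ is no longer bounded from below and the direct method of Theorem \ref{exis} is unavailable. First I would record that $I\in C^{1}(E_{0}^{\alpha,p},\R)$ with the same formula for $\langle I'(u),v\rangle$ as in Lemma \ref{Mlm1}; the argument is identical, except that the subcritical growth exponent now runs up to $\widetilde{p}$, so every use of the embedding $E_{0}^{\alpha,p}\hookrightarrow L^{p}$ must be replaced by the continuous and compact embedding $E_{0}^{\alpha,p}\hookrightarrow L^{q}[0,T]$ for $p\le q<\widetilde{p}$ furnished by Remark \ref{rpbar}. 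Note also that $(S_0)$ gives $F(t,0)=0$, hence $I(0)=0$.

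Next I would verify the Palais--Smale condition. Let $\{u_n\}$ satisfy $|I(u_n)|\le M$ and $I'(u_n)\to 0$. Testing with $u_n$ and combining,
\begin{equation*}
I(u_n)-\frac{1}{\mu}\langle I'(u_n),u_n\rangle = \Big(\frac{1}{p}-\frac{1}{\mu}\Big)\|u_n\|_{\alpha,p}^{p} + \int_{0}^{T}\Big[\frac{1}{\mu}f(t,u_n)u_n - F(t,u_n)\Big]dt .
\end{equation*}
On $\{|u_n|\ge r\}$ the integrand is nonnegative by $(S_1)$, while on $\{|u_n|<r\}$ it is bounded below by a constant depending only on $r$ and $\max_{[0,T]\times[-r,r]}(|f|,|F|)$; since the left-hand side is $\le M+o(1)\|u_n\|_{\alpha,p}$ and $\frac{1}{p}-\frac{1}{\mu}>0$, this forces $\{u_n\}$ to be bounded in $E_{0}^{\alpha,p}$. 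Passing to a subsequence $u_n\rightharpoonup u$, the compact embedding of Remark \ref{rpbar} gives $u_n\to u$ in $L^{q}[0,T]$; the growth $(S_0)$ then yields continuity of the Nemytskii map, so $H'(u_n)\to H'(u)$ in $(E_{0}^{\alpha,p})^{*}$. Writing $\mathcal{I}'(u_n)=I'(u_n)+H'(u_n)\to H'(u)$ and invoking that $\mathcal{I}'$ is a homeomorphism (Theorem \ref{Mth01}(3)), or equivalently of type $(S_+)$, we conclude $u_n\to u$ strongly.

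I would then establish the mountain pass geometry in two steps. For the behavior near the origin, $(S_2)$ together with $(S_0)$ gives, for any $\varepsilon>0$, a constant $C_\varepsilon$ with $|F(t,\xi)|\le \varepsilon|\xi|^{p}+C_\varepsilon|\xi|^{q}$ for all $\xi$; combined with the embedding estimates (\ref{FC-FEeq3}) and (\ref{pbar}) this yields
\begin{equation*}
I(u)\ge \Big(\frac{1}{p}-\varepsilon C_1\Big)\|u\|_{\alpha,p}^{p} - C_\varepsilon C_2\|u\|_{\alpha,p}^{q},
\end{equation*}
so after fixing $\varepsilon$ small (using that $q>p$) there exist $\rho,\beta>0$ with $I(u)\ge\beta$ whenever $\|u\|_{\alpha,p}=\rho$. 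For the far end, integrating the Ambrosetti--Rabinowitz inequality $(S_1)$ produces constants $a_1>0$, $a_2\ge 0$ with $F(t,\xi)\ge a_1|\xi|^{\mu}-a_2$; hence for a fixed $\varphi\in C_{0}^{\infty}[0,T]\setminus\{0\}\subset E_{0}^{\alpha,p}$ and $s>0$,
\begin{equation*}
I(s\varphi)\le \frac{s^{p}}{p}\|\varphi\|_{\alpha,p}^{p} - a_1 s^{\mu}\int_{0}^{T}|\varphi(t)|^{\mu}dt + a_2 T ,
\end{equation*}
which tends to $-\infty$ as $s\to\infty$ because $\mu>p$, so we may pick $e=s_0\varphi$ with $\|e\|_{\alpha,p}>\rho$ and $I(e)<0$.

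With $I(0)=0<\beta\le I(u)$ on $\|u\|_{\alpha,p}=\rho$ and $I(e)<0$, the Mountain Pass Theorem produces a critical value $c\ge\beta>0$ and a corresponding critical point $u^{*}$; since $I(u^{*})=c>0=I(0)$ we have $u^{*}\neq 0$, and as a critical point of $I$ it is a nontrivial weak solution of (\ref{I01}). I expect the main obstacle to be the Palais--Smale step: unlike the coercive setting of Theorem \ref{exis}, here boundedness of $\{u_n\}$ rests entirely on the superquadraticity $(S_1)$, and the passage from weak to strong convergence genuinely requires both the \emph{compact} embedding into $L^{q}$ with $q<\widetilde{p}$ (Remark \ref{rpbar}) and the $(S_+)$/homeomorphism property of $\mathcal{I}'$ from Theorem \ref{Mth01}, since $E_{0}^{\alpha,p}$ carries no Hilbert structure when $p\neq 2$.
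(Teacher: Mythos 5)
Your proposal is correct and follows essentially the same route as the paper: the Mountain Pass Theorem applied to $I$, with the geometry near the origin obtained from $(S_0)$, $(S_2)$ via the estimate $|F(t,\xi)|\le \frac{\varepsilon}{p}|\xi|^{p}+C_\varepsilon|\xi|^{q}$ and the embedding (\ref{pbar}), the behavior at infinity from the integrated Ambrosetti--Rabinowitz inequality $F(t,\xi)\ge a_1|\xi|^{\mu}-a_2$, and boundedness of Palais--Smale sequences from the same combination $\mu I(u_k)-\langle I'(u_k),u_k\rangle$ (your normalization by $1/\mu$ is equivalent). The one place where you genuinely diverge is the passage from weak to strong convergence in the (PS) step, and there your version is in fact the more careful one: the paper asserts $u_k\to u$ in $C[0,T]$, which rests on Proposition \ref{FC-FEprop4} and hence requires $\alpha>\frac{1}{p}$, whereas Theorem \ref{main} is framed with $\widetilde{p}=\frac{p}{1-\alpha p}$, i.e.\ precisely in the regime $\alpha<\frac{1}{p}$ where no embedding into $C[0,T]$ is available. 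Your substitute --- compact embedding $E_{0}^{\alpha,p}\hookrightarrow L^{q}[0,T]$ for $q<\widetilde{p}$ from Remark \ref{rpbar}, continuity of the Nemytskii map $L^{q}\to L^{q'}$ under the growth $(S_0)$, hence $H'(u_k)\to H'(u)$ in the dual, and then the homeomorphism/$(S_+)$ property of $\mathcal{I}'$ from Theorem \ref{Mth01} --- repairs this slip and is the argument the paper should have written. One small caveat shared by both proofs: when $q=p$ is allowed in $(S_0)$, the near-origin estimate as stated does not immediately give the mountain pass geometry; one should first use $(S_2)$ to trade the exponent $p$ for some $q'\in(p,\widetilde{p})$ on the set $\{|\xi|>\delta\}$, after which the argument goes through unchanged.
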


\begin{proof}
First, by ($S_0$) and ($S_2$), for every $0 < \epsilon < \frac{1}{C_{\widetilde{p},p}^p}$ there exists $C_\epsilon >0$ such that
\begin{equation}\label{V08}
F(t,u) \leq \frac{\epsilon}{p}|u|^p + \frac{C_\epsilon}{q}|u|^q.
\end{equation}
So, by (\ref{pbar}), for small $\rho > 0$,
\begin{equation*}
\begin{aligned}
I(u) &= \frac{1}{p}\|u\|_{\alpha,p}^p - \int_{0}^{T}F(t,u(t))dt \\
& \geq \frac{1}{p}\|u\|_{\alpha,p}^p - \frac{\epsilon}{p} C_{\widetilde{p},p}^p \|u\|_{\alpha, p}^p
- \frac{C_\epsilon}{q} C_{\widetilde{p},q}^q \|u\|_{a,p}^q\\
& = \frac{1}{p}(1 - \epsilon C_{\widetilde{p},p}^p) \|u\|_{\alpha, p}^p
- \frac{C_\epsilon}{q} C_{\widetilde{p},q}^q \|u\|_{a,p}^q\\
& \geq \frac{1}{2 p}(1 - \epsilon C_{\widetilde{p},p}^p) \|u\|_{\alpha, p}^p,
\end{aligned}
\end{equation*}
for all $u \in \overline{B}_\rho$, where $B_\rho = \{u \in E_0^{\alpha,p}: \; \|u\|_{\alpha,p} < \rho\}$.
So,
$$
I(u) \geq  \frac{1}{2 p}(1 - \epsilon C_{\widetilde{p},p}^p) \rho^p : = \beta >0.
$$

On the other hand, we note that, by ($S_1$), there exists $b_1, b_2 >0$ such that
$$
F(t,u) \geq b_1|u|^\mu - b_2,\;\;(t,u)\in [0,T] \times \mathbb{R}.
$$
Thus, 
\begin{equation}\label{V10}
\begin{aligned}
I(u) &= \frac{1}{p}\|u\|^p - \int_{0}^{T}F(t,u(t))dt  \\
&\leq \frac{1}{p}\|u\|^p - \int_{0}^{T} (b_1 |u(t)|^{\mu} - b_2)dt.
\end{aligned}
\end{equation}
Let $w(t) \in X^\alpha$ and $s>0$, then we have
$$
\begin{aligned}
I(sw) \leq \frac{s^p}{p}\|w\|^p - b_1s^\mu \|w\|_{L^\mu}^\mu - b_2T.
\end{aligned}
$$
Since $p< \mu$, then we have
$$
I(sw) \to -\infty\;\;\mbox{as}\;\;s\to +\infty.
$$

Finally we need to show that $I$ satisfies (PS) condition. Let $\{u_k\}_{k\in \mathbb{N}} \subset X^\alpha$ such that
\begin{equation}\label{V11}
\{I(u_k\}\;\;\mbox{is bounded and}\;\;\;I'(u_k) \to 0\;\;\mbox{as} \; k\to +\infty.
\end{equation}
Since $\mu F(t,\xi) - \xi f(t,\xi)$ is continuous for $|\xi| \leq r, t\in [0,T]$, there exist a $\mathfrak{C}>0$ such that
$$
\mu F(t,\xi) \leq \xi f(t, \xi) + \mathfrak{C},
$$
which together with ($S_1$) yield that
$$
\mu F(t, \xi) \leq \xi f(t, \xi) + \mathfrak{C},\;\;\xi \in \mathbb{R}, \; t\in [0,T].
$$
Therefore, we can obtain that
$$
\begin{aligned}
\mu I(u_k) - I'(u_k)u_k & = \left( \frac{\mu}{p} - 1 \right)\|u_k\|^p + \int_{0}^{T} [u_k(t)f(t, u_k(t)) - \mu F(t, u_k(t))]dt \\
&\geq \left( \frac{\mu}{p} - 1 \right)\|u_k\|^p - T\mathfrak{C},
\end{aligned}
$$
which implies that $\{u_k\}$ is bounded in $X^\alpha$. Since $X^\alpha$ is a reflexive Banach space, there exists a subsequence, denoted again by $\{u_k\}_{k\in \mathbb{N}}$ for simplicity, and $u\in X^\alpha$ such that $u_k \rightharpoonup u$ in $X^\alpha$. Furthermore, we get $u_k\to u$ in $C[0,T]$. So
\begin{equation}\label{V12}
\begin{aligned}
&\int_{0}^{T}(f(t, u_k(t)) - f(t,u(t)))(u_k(t) - u(t))dt \to 0,\\
&\langle I'(u_k) - I'(u), u_k - u \rangle \to 0.
\end{aligned}
\end{equation}
Next, by (\ref{V12}), we obtain that
$$
\langle \mathcal{I}'(u_k) - \mathcal{I}'(u), u_k - u \rangle = \langle I'(u_k) - I'(u), u_k - u \rangle + \int_{0}^{T} (f(t,u_k) - f(t,u))(u_k - u)dt \to 0.
$$
Therefore, by Theorem \ref{Mth01} we obtain $u_k \to u$ in $X^\alpha$. By this we proved that $I$ satisfies the (PS)-condition. Having applied Mountain Pass Theorem, we get a weak solution to (\ref{I02}).
\end{proof}

\textcolor[rgb]{1.00,0.00,0.00}{;;;;;;;;;;;;;;;;;;;;;;;;;;;;;;;;;;;;;;;;;;;;;;;;;;;;;;;;;;;;;;;;;;;;;;;;;;;;;;;;;;;;;;;;;;;;;;;}

\end{document}